\documentclass[12pt]{amsart}
\usepackage{amsmath,amssymb,amsfonts,amsthm}
\usepackage{graphicx}
\usepackage{enumerate}
\usepackage{tikz}
\usepackage[mathscr]{eucal}
\usepackage{float}
\numberwithin{equation}{section}
\newtheorem{theorem}{Theorem}[section]

\newtheorem{lemma}[theorem]{Lemma}
\newtheorem{proposition}[theorem]{Proposition}

\newtheorem{claim}[theorem]{Claim}
\newtheorem{conjecture}[theorem]{Conjecture}
\newtheorem{observation}[theorem]{Observation}
\parindent=0in
\numberwithin{equation}{section}

\pagestyle{plain}

 \makeatletter
      \def\@setcopyright{}
      \def\serieslogo@{}
      \makeatother

\begin{document}
\bibliographystyle{plain}

\title{Integer sequences with conjectured relation with certain graph parameters of the family of linear Jaco graphs.}
\author{Johan Kok} 
\address{Independent Mathematics Researcher, City of Tshwane, South Africa.}
\email{$^{\dag}$jacotype@gmail.com}
\date{}
\keywords{maximum degree; size of graph; domination number; Jaconian set; Jaco graph.}
\subjclass[2010]{05B10, 05C07, 05C12, 05C30, 05C75} 
\begin{abstract}
This experimental study presents some interesting conjectured relations between some integer sequences and certain graph parameters of the family of linear Jaco graphs $J_n(x)$ where $n = 1,2,3,\dots$. It appears that $\textit{Golden ratio}$-like floor function terms play an important role in the analysis of the graph structural properties of the family of linear Jaco graphs. The experimental methodology to obtain the conjectures is indeed trivial. However, it is the author's view that the proofs or disproofs of the conjectures may be challenging.
\end{abstract}
\maketitle
\section{Introduction}
It is assumed that the reader is familiar with the basic terminology and notions of graph theory, see {\cite{bondy, harary, west}. In 2014 the authors Kok et al. introduced the family of linear Jaco graphs. See \cite{kok1, kok2} and the references thereto. Unless stated otherwise, all graphs in this experimental study will be linear Jaco graphs. 

The celebrated Abel-Ruffini theorem (the year 1824) states that in general, $P(x) = 0$ where $P(x)$ is a polynomial of order five or higher cannot be solved by radicals. Over the years which followed various techniques were developed to solve such polynomial equations by approximation. Surprisingly, in the year 2025 Wildberger et al. \cite{wild} published a (in the author's view) remarkable general solution to all polynomial equations. In \cite{wild} various references are made to graph theoretical results which highlight a cross-over between the boundaries of graph theory and modern algebra and perhaps with mathematical analysis. The aforesaid motivates this study in that studying mathematics for the sake of mathematics is a worthy cause.

In \cite{assous} another cross-over between graph theory and modern algebra is mentioned by Assous et al. It was noted that posets of minimal type are related to the notion of linear Jaco graphs. Also, in \cite {kok4} four conjectures are presented all of which state a relation between an integer sequence and some graph parameter in respect of linear Jaco graphs. Conjectures offer an opportunity to develop techniques of proof or disproof. Furthermore, the attempt to conjecture integers sequences as inherent results offers an opportunity to distinguish between families of graphs for which a given parameter has a closed formula or has a \textit{sequential} formula. Note that a sequential formula can be by recursion through the size of a graph in the family say, for graphs $G_k$, $k = 1,2,3,\dots, n$ or through iteration say, $i$-iterations within the given graph $G$ of size $n$ and $i = 1,2,3,\dots,k$. The motivation for this study does not lie in possible applications \textit{per se} but rather in a new research avenue it could ignite. However, since many integer sequences have real world applications a graph theoretical approach to some of these application could be unveiled.

Recall that a sequence of non-negative integers say $\textbf{d} = (d_1,d_2,d_3,\dots,d_n)$ is said to be graphical if there exists a simple graph $G$ which has degree sequence \textbf{d}. Put differently, \textbf{d} is graphical in respect of the vertex degrees if there exists a simple graph $G$ which has degree sequence \textbf{d}. Note that \textbf{d} could be either a random or a definable sequence. A definable sequence can only be generated from a closed or recursive formula. A generalization can be investigated. That is, if $p$ is a graph parameter (independence number, domination number, chromatic number, hub number and so on) then a definable sequence \textbf{p} = $(p_1,p_2,p_3,\dots,p_n)$ is $p$-graphical if a family of simple graphs exists say $\mathcal{F} = \{G_1, G_2, G_3,\dots,G_n\}$ such that,
\begin{center}
$p(G_1) = p_1$, $p(G_2) = p_2$, $p(G_3) = p_3,\cdots,p(G_n) = p_n$.
\end{center}
\section{On linear Jaco graphs}
Readers are referred to \cite{kok1, kok2} for the formal definitions of $J_\infty(x)$ and $J_n(x)$. In this paper the underlying graph of linear Jaco graphs will be considered. For the latter purpose we recall the construction method as explained in \cite{kok3}.  

\textit{Construction method to obtain $J_n(x)$:}\cite{kok3} 

(a) Let the empty graph (edgeless graph) of order $n \geq 1$ be denoted by $\mathfrak{N}_n$. Let $X = \{v_i: i = 4,5,6,\dots,n\}$

and let $G_2 = P_3 \cup \mathfrak{N}_{n-3}$ where $V(\mathfrak{N}_{n-3}) = X$.

(b) By normal consecutive step-count for $i = 3,4,5,\dots, n-1$ do as follows: 

To obtain $G_i$ add the edges $v_iv_{i+1}$, $v_iv_{i+2}$,$\dots, v_iv_{i+t}$ with $t$ a maximum such that, $deg_{G_i}(v_i) \leq i$.

(c) After completion of step-count $i = n-1$ label the resultant graph $J_n(x)$.
Note that $J_n(x)$ is a simple connected undirected graph.

Recall from \cite{kok3} that for a given $J_n(x)$ the open neighborhood of $v_i$ in a given by:
$N(v_i) = N^-(v_i) \cup N^+(v_i)$ where $N^-(v_i) = \{v_j:j < i$ and the edge $v_jv_i$ exists$\}$ and $N^+(v_i) = \{v_j:j > i$ and the edge $v_iv_j$ exists$\}$.
Also let $t_1(v_i) = |N^-(v_i)|$ and $t_2(v_i) = |N^+(v_i)|$ and let $t^\star_2(v_i) = max\{t_2(v_j):v_j \in V(J_n(x))$ in sufficiently large $J_n(x):n > j\}$. The aforesaid yields $t^\star_2(v_i) = i - t_1(v_i)$.

To identify conjectured integer sequences from linear Jaco graphs the next table of verified data is presented. For $J_n(x)$ the $\Delta$-set (or Jaconian set) is the subset of vertices which have maximum degree in $J_n(x)$. The size (number of edges) is denoted by $\varepsilon(J_n(x))$ and the domination number by $\gamma(J_n(x))$. 
\begin{center}
Table 1. Adapted extract from \cite{kok2}.
\end{center}
\begin{center}
\begin{tabular}{|c|c|c|c|c|c|c|c|}
\hline
$i\in{\Bbb{N}}$&$t_1(v_i)$&$t^\star_2(v_i)$&$\varepsilon(J_n(x))$&$\Delta$-set&$\gamma(J_n(x))$&$diam(J_n(x))$&$\Delta(J_n(x))$\\
\hline
1=$f_2$&0&1&0&$\{v_1\}$&0&0&0\\
\hline
2=$f_3$&1&1&1&$\{v_1,v_2\}$&1&1&1\\
\hline
3=$f_4$&1&2&2&$\{v_2\}$&1&2&2\\
\hline
4&1&3&3&$\{v_2,v_3\}$&2&3&2\\
\hline
5=$f_5$&2&3&5&$\{v_3\}$&2&3&3\\
\hline
6&2&4&7&$\{v_3,v_4,v_5\}$&2&4&3\\
\hline
7&3&4&10&$\{v_4,v_5\}$&2&4&4\\
\hline
8=$f_6$&3&5&13&$\{v_5\}$&2&4&5\\
\hline
9&3&6&16&$\{v_5,v_6,v_7\}$&2&5&5\\
\hline
10&4&6&20&$\{v_6,v_7\}$&2&5&6\\
\hline
11&4&7&24&$\{v_7\}$&2&5&7\\
\hline
12&4&8&28&$\{v_7,v_8\}$&3&5&7\\
\hline
13=$f_7$&5&8&33&$\{v_8\}$&3&5&8\\
\hline
14&5&9&38&$\{v_8,v_9,v_{10}\}$&3&6&8\\
\hline
15&6&9&44&$\{v_9,v_{10}\}$&3&6&9\\
\hline
16&6&10&50&$\{v_{10}\}$&3&6&10\\
\hline
17&6&11&56&$\{v_{10},v_{11}\}$&3&6&10\\
\hline
18&7&11&63&$\{v_{11}\}$&3&6&11\\
\hline
19&7&12&70&$\{v_{11},v_{12},v_{13}\}$&3&6&11\\
\hline
20&8&12&78&$\{v_{12},v_{13}\}$&3&6&12\\
\hline
21&8&13&86&$\{v_{13}\}$&3&6&13\\
\hline
22&8&14&94&$\{v_{13},v_{14},v_{15}\}$&3&7&13\\
\hline
23&9&14&103&$\{v_{14},v_{15}\}$&3&7&14\\
\hline
24&9&15&112&$\{v_{15}\}$&3&7&15\\
\hline
25&9&16&121&$\{v_{15},v_{16}\}$&3&7&15\\
\hline
\end{tabular}
\end{center}

\begin{center}
\begin{tabular}{|c|c|c|c|c|c|c|c|}
\hline
$i\in{\Bbb{N}}$&$t_1(v_i)$&$t^\star_2(v_i)$&$\varepsilon(J_n(x))$&$\Delta$-set&$\gamma(J_n(x))$&$diam(J_n(x))$&$\Delta(J_n(x))$\\
\hline
26&10&16&131&$\{v_{16}\}$&3&7&16\\
\hline
27&10&17&141&$\{v_{16},v_{17},v_{18}\}$&3&7&16\\
\hline
28&11&17&152&$\{v_{17},v_{18}\}$&3&7&17\\
\hline
29&11&18&163&$\{v_{18}\}$&3&7&18\\
\hline
30&11&19&174&$\{v_{18},v_{19},v_{20}\}$&3&7&18\\
\hline
31&12&19&186&$\{v_{19},v_{20}\}$&3&7&19\\
\hline
32&12&20&198&$\{v_{20}\}$&3&7&20\\
\hline
\end{tabular}
\end{center}

The subscript $i$ means that the infinite linear Jaco graph $J_\infty{x}$ is under consideration and $i$ identifies a specific vertex $v_i$ of $J_\infty{x}$. The subscript $n$ means that the finite linear Jaco graph $J_n(x)$ is considered where $n = 1,2,3,\dots$ All sequences mentioned in the conjectures are found in \cite{sloane}.

\subsection{Size of $J_n(x)$, $n = 1,2,3,\dots$:} Recall that Wagner \cite{kok1, kok3} identified that in accordance with sequence A060144 we have the result $t_1(v_i) = \lfloor \frac{2(i+1)}{3+\sqrt{5}}\rfloor$, $i = 1,2,3,\dots$ holds. Put differently, the sequence A060144 is $p$-graphical where $p(G) = t_1(v_i)$, $i = 1,2,3,\dots$ and $\mathcal{F} = \{G:G=J_n(x), n = 1,2,3,\dots\}$ or simply $J_\infty(x)$.
\begin{proposition}\label{prop1}
With the minor adaption of ignoring $n=0$ the size of $J_n(x)$, $n = 1,2,3,\dots$ increases in accordance with sequence A183137 with formula
\begin{center}
$\varepsilon(n) = \sum\limits_{i=1}^{n}\lfloor \frac{2(i+1)}{3+\sqrt{5}}\rfloor$, $n = 1,2,3,\dots$
\end{center}

Furthermore, it implies that sequence A183137 is $p$-graphical where $p(G) = \varepsilon(G)$ and $\mathcal{F} = \{G:G=J_n(x), n = 1,2,3,\dots\}$.
\end{proposition}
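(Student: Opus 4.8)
The plan is to prove the formula $\varepsilon(n) = \sum_{i=1}^{n} t_1(v_i)$ directly from the construction method, and then to invoke Wagner's identity $t_1(v_i) = \lfloor \frac{2(i+1)}{3+\sqrt 5}\rfloor$ (quoted in the excerpt as a known result) to substitute into the sum. The identification with sequence A183137 will follow because A183137 is defined (up to the stated shift ignoring $n=0$) as exactly the partial sums of A060144, and $t_1$ realizes A060144 by the quoted result of Wagner; so the only mathematical content I actually need to establish is the edge-counting identity $\varepsilon(J_n(x)) = \sum_{i=1}^{n} t_1(v_i)$.

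First I would fix notation: for a fixed large $N$ and $i \le N$, write $t_1(v_i)$ for $|N^-(v_i)|$ in $J_N(x)$, noting (as recalled in the excerpt) that once $N$ is large enough relative to $i$ this value stabilizes, so $t_1(v_i)$ is well defined as a function of $i$ alone. The key observation is that every edge of $J_n(x)$ is of the form $v_jv_i$ with $j<i$, and in the bipartition-by-endpoint sense each edge is counted exactly once by its larger endpoint. Hence $\varepsilon(J_n(x)) = \sum_{i=1}^{n} |N^-(v_i) \cap \{v_1,\dots,v_n\}|$. The substantive step is to check that for every $i \le n$ one has $N^-(v_i) \subseteq \{v_1,\dots,v_n\}$ automatically (trivially true since $j<i\le n$) and, more importantly, that $|N^-(v_i)|$ in $J_n(x)$ already equals the stabilized value $t_1(v_i)$ — i.e. that no in-neighbors of $v_i$ are "lost" by truncating at $n$. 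This holds because an edge $v_jv_i$ with $j<i\le n$ is created during step-count $j \le n-1$, which is executed in the construction of $J_n(x)$; and the step-count rule ($\deg(v_j)\le j$, $t$ maximal) depends only on vertices already present and never removes edges. I would spell this out by induction on the step-count index, arguing that $G_i$ inside the construction of $J_n(x)$ agrees with $G_i$ inside the construction of $J_{n'}(x)$ for any $n'\ge n$ on the induced subgraph on $\{v_1,\dots,v_n\}$.

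Having the identity $\varepsilon(n) = \sum_{i=1}^{n} t_1(v_i)$, I would then substitute Wagner's closed form for $t_1(v_i)$ to obtain $\varepsilon(n) = \sum_{i=1}^{n}\lfloor \frac{2(i+1)}{3+\sqrt 5}\rfloor$, and finally match this against the definition of A183137 in \cite{sloane}, confirming the index alignment against the verified data in Table 1 (e.g. $\varepsilon(5)=5$, $\varepsilon(8)=13$, $\varepsilon(13)=33$) as a sanity check. The $p$-graphical conclusion is then immediate from the definition given in the introduction, taking $p(G)=\varepsilon(G)$ and $\mathcal F=\{J_n(x):n=1,2,3,\dots\}$.

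I expect the main obstacle to be the truncation-stability argument, namely making fully rigorous that $|N^-(v_i)|$ computed inside the finite graph $J_n(x)$ coincides with the value $t_1(v_i)$ appearing in Wagner's formula (which is implicitly the value in $J_\infty(x)$ or in sufficiently large $J_N(x)$). This requires a careful reading of the construction method to confirm that the step-count for index $j$ is insensitive to whether later vertices $v_{j+t+1},\dots$ exist, which is clear from the rule "$t$ a maximum such that $\deg_{G_i}(v_i)\le i$" provided $n$ is large enough that the maximal $t$ for each $j\le n-1$ does not get clipped by the boundary — a point that needs either a hypothesis or a short argument that the relevant $t$ never reaches past $v_n$ for the in-neighbor count (indeed $v_j$'s forward reach is bounded and $j\le n-1$, but the cleanest statement is simply that $N^-(v_i)$ for $i\le n$ is determined by step-counts $j<i\le n$, all of which occur in $J_n(x)$). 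Everything else is routine bookkeeping and an appeal to the cited sequence database.
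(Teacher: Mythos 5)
Your proposal is correct and follows essentially the same route as the paper: count each edge at its larger endpoint to get $\varepsilon(J_n(x)) = \sum_{i=1}^{n} t_1(v_i)$ and then substitute Wagner's closed form for $t_1(v_i)$. The paper's proof is a one-line appeal to the construction, so your additional truncation-stability argument (that $N^-(v_i)$ for $i \le n$ is already determined by step-counts $j < i$, hence unaffected by cutting off at $v_n$) simply makes explicit a detail the paper leaves implicit.
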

\begin{proof}
The validity of $t_1(v_i) = \lfloor \frac{2(i+1)}{3+\sqrt{5}}\rfloor$ read together with the construction of a finite linear Jaco graph implies immediately that
\begin{center}
$\varepsilon(J_n(x)) = \sum\limits_{i=1}^{n}t_1(v_i)$, $n = 1,2,3,\dots$ 
\end{center}
That settles the proof.
\end{proof}
\subsection{Maximum degree of $J_n(x)$, $n = 1,2,3,\dots$:} With regards to Table 1 column 8 and besides $\Delta(J_1(x)) = 0$ (to be excluded) the maximum degree of $J_n(x)$, $n = 2,3,4,\dots$ is conjectured below.
\begin{conjecture}\label{conj1}
For the finite linear Jaco graphs $J_n(x)$, $ n = 2,3,4,\dots$ the respective maximum degrees $\Delta(J_n(x))$ are yielded by the sequence A319433 with closed formula $\Delta(J_n(x)) = \lfloor \frac{2(n+2)}{1+ \sqrt{5}}\rfloor -1$, $n = 2,3,4,\dots$

Furthermore, it implies that sequence A319433 is $p$-graphical where $p(G) = \Delta(G)$ and $\mathcal{F} = \{G:G=J_n(x), n = 1,2,3,\dots\}$.
\end{conjecture}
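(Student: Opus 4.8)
\emph{A plan towards Conjecture \ref{conj1}.} The strategy is to express $\Delta(J_n(x))$ as the solution of a one–parameter extremal problem over the vertices, and then to resolve that problem by a golden–ratio (Beatty) identity. Throughout write $\varphi=\frac{1+\sqrt5}{2}$, so that $\varphi^2=\varphi+1$, and note that $\frac{2}{3+\sqrt5}=\frac1{\varphi^2}=2-\varphi$ and $\frac{2}{1+\sqrt5}=\frac1\varphi$; with $a_i:=t_1(v_i)=\lfloor (i+1)/\varphi^2\rfloor$ the target formula reads $\Delta(J_n(x))=\lfloor (n+2)/\varphi\rfloor-1$. The first task is to prove that for $1\le i\le n$,
\[
\deg_{J_n(x)}(v_i)=a_i+\min\{\,i-a_i,\;n-i\,\}=\min\{\,i,\;n-i+a_i\,\}.
\]
For this one checks, from the construction method, that the presence of an edge $v_jv_i$ with $j<i$ is governed solely by the ``local'' inequality $i-j\le t_2^\star(v_j)$, which is insensitive to truncation at $n$ as long as $i\le n$; hence $t_1(v_i)$ in $J_n(x)$ equals its value $a_i$ in $J_\infty(x)$. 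For the forward neighbours, step $i$ appends the consecutive vertices $v_{i+1},\dots,v_{i+t}$ with $t$ maximal subject both to $\deg_{G_i}(v_i)\le i$ and to the existence of the vertices, whence $t=\min\{\,t_2^\star(v_i),\,n-i\,\}=\min\{\,i-a_i,\,n-i\,\}$.

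\emph{Locating the maximum.} Since $0<1/\varphi^2<1$ we have $a_{i+1}-a_i\in\{0,1\}$, so $i\mapsto n-i+a_i$ is non-increasing while $i\mapsto i$ is strictly increasing, and $i\mapsto 2i-a_i$ is strictly increasing. It follows that $\Delta(J_n(x))=\max_{1\le i\le n}\min\{\,i,\,n-i+a_i\,\}$ is attained at $i^\star:=\max\{\,i:2i-a_i\le n\,\}$, which is well defined for $n\ge2$ (the set is nonempty since $2\cdot1-a_1=2\le n$, and bounded since $2n-a_n>n$). A one–line estimate from $2(i^\star+1)-a_{i^\star+1}\ge n+1$ gives $n-(i^\star+1)+a_{i^\star+1}\le i^\star$, so that the contribution of all indices $i>i^\star$ is at most $i^\star$ and therefore $\Delta(J_n(x))=i^\star$.

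\emph{The Beatty identity and conclusion.} Using $1/\varphi^2=2-\varphi$, the identity $\lfloor -x\rfloor=-\lceil x\rceil$, and the irrationality of $(i+1)\varphi$, one rewrites
\[
a_i=\big\lfloor 2(i+1)-(i+1)\varphi\big\rfloor=2(i+1)-\lceil (i+1)\varphi\rceil=2i+1-\lfloor (i+1)\varphi\rfloor ,
\]
hence $2i-a_i=\lfloor (i+1)\varphi\rfloor-1$. Therefore $2i-a_i\le n\iff\lfloor(i+1)\varphi\rfloor\le n+1$, and writing $k=i+1$ and using that $k\varphi$ and $(n+2)/\varphi$ are irrational, $\lfloor k\varphi\rfloor\le n+1\iff k\varphi<n+2\iff k\le\lfloor(n+2)/\varphi\rfloor$. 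Thus $i^\star+1=\lfloor(n+2)/\varphi\rfloor$, giving $\Delta(J_n(x))=i^\star=\lfloor(n+2)/\varphi\rfloor-1=\big\lfloor\tfrac{2(n+2)}{1+\sqrt5}\big\rfloor-1$; the $p$-graphical assertion is then immediate from the definition with $\mathcal F=\{J_n(x):n=1,2,3,\dots\}$.

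\emph{Expected obstacle.} The one point that must be handled with genuine care is the degree formula of the first paragraph — in particular, confirming that passing from $J_\infty(x)$ to $J_n(x)$ does not disturb the in-degrees of vertices near $v_n$ (it does not, precisely because the adjacency test $i-j\le t_2^\star(v_j)$ never references $n$), and that the sole effect of finiteness is to cap the forward fan of each $v_i$ at $v_n$. Once the identity $2i-a_i=\lfloor(i+1)\varphi\rfloor-1$ is in hand — which is really just $\varphi^2=\varphi+1$ in disguise — the monotonicity and floor manipulations are routine, and that identity is the computational crux that would turn the conjecture into a theorem.
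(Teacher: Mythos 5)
Your plan is sound and, as far as I can check, every step of it is correct; note first that the paper does not actually prove this statement --- it is left as a conjecture, accompanied only by a ``Motivation'' that reduces it to an unproven nested floor identity. The paper's route is to observe that the $\Delta$-set contains $v_{q}$ with $q=n-t_1(v_n)$, write $\Delta(J_n(x))=t_1(v_n)+t_1(v_q)$, and leave open the identity $\lfloor\frac{2(n+1)}{3+\sqrt5}\rfloor+\lfloor\frac{2(q+1)}{3+\sqrt5}\rfloor=\lfloor\frac{2(n+2)}{1+\sqrt5}\rfloor-1$. You take a genuinely different decomposition: the degree formula $\deg_{J_n(x)}(v_i)=\min\{i,\,n-i+a_i\}$ (which checks out against the construction --- in-degrees are unaffected by truncation, out-degrees are capped at $n-i$), the monotonicity argument locating the maximum at the crossover index $i^\star=\max\{i:2i-a_i\le n\}$ with $\Delta(J_n(x))=i^\star$, and the identity $2i-a_i=\lfloor(i+1)\varphi\rfloor-1$ via $\lfloor-x\rfloor=-\lceil x\rceil$ and irrationality. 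This buys two things the paper's motivation does not: the maximum-degree vertex is identified as the one whose degree saturates at its own index (avoiding the nested floor in $q$ entirely), and the Beatty computation is actually carried out, so your argument would upgrade the conjecture to a theorem. The only shared hypothesis, which you use exactly as the paper does, is Wagner's formula $t_1(v_i)=\lfloor\frac{2(i+1)}{3+\sqrt5}\rfloor$; your proof is conditional on that formula being established in the cited references, just as Proposition~\ref{prop1} is. I verified your closed form and your $i^\star$ against Table~1 for several values of $n$ (e.g.\ $n=8,12,25$) and they agree, and your formula is also consistent with the paper's expression $t_1(v_n)+t_1(v_{n-t_1(v_n)})$, so the two reductions are compatible; yours is simply the one that closes.
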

\textit{Motivation:} Considering Table 1 column 5 and read together with the construction of a linear Jaco graph it follows as a trivial fact that for a given $n$ the $\Delta$-set of $J_n(x)$ contains the vertex $v_{n-t_1(v_n)}$. Hence, $\Delta(J_n(x) = t_1(v_n) + t_1(v_{v_{n-t_1(v_n)}})$. What remains to be proven is that:
\begin{center}
$\lfloor \frac{2(n+1)}{3+\sqrt{5}}\rfloor + \lfloor \frac{2(q+1)}{3+\sqrt{5}}\rfloor = \lfloor \frac{2(n+2)}{1+ \sqrt{5}}\rfloor -1$ where $q = n-\lfloor \frac{2(n+1)}{3+\sqrt{5}}\rfloor.$
\end{center}
 
From Table 1 column 8 it also follows that besides that $\Delta(J_1(x)) = 0$ (to be excluded) the maximum degree values which appear uniquely (appear once) i.e. $\Delta = 1,4,6,9,12,14,17,\dots$ are conjectured below.
\begin{conjecture}\label{conj2}
For $t = 1,2,3,\dots$ the values given by sequence A003622 i.e. the Wythoff compound sequence AA: $a(t) = \lfloor t \times (\frac{1+\sqrt{5}}{2})^2\rfloor -1$ are the unique maximum degrees of corresponding linear Jaco graphs.

Furthermore, it implies that sequence A003622 is $p$-graphical where $p(G) = \Delta(G)_{unique}$ and $\mathcal{F} \in \{G:G=J_n(x), n = 1,2,3,\dots\}$.
\end{conjecture}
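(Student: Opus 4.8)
The plan is to reduce Conjecture~\ref{conj2} to a short computation with Beatty sequences. Throughout write $\varphi=\tfrac{1+\sqrt5}{2}$, so that $\tfrac{2}{1+\sqrt5}=\varphi^{-1}$, $\tfrac{2}{3+\sqrt5}=\varphi^{-2}$, and $\varphi^{-1}+\varphi^{-2}=1$. The first step is to fix a closed form for the maximum degree. Granting Conjecture~\ref{conj1} this is $\Delta(J_n(x))=\lfloor(n+2)\varphi^{-1}\rfloor-1$ for $n\ge2$; alternatively one can obtain it from the facts already recorded in the excerpt, since Wagner's identity reads $t_1(v_i)=\lfloor(i+1)\varphi^{-2}\rfloor$, and $\varphi^{-2}=1-\varphi^{-1}$ together with the irrationality of $\varphi$ gives $t_1(v_n)=n-\lfloor(n+1)\varphi^{-1}\rfloor$, hence $n-t_1(v_n)=\lfloor(n+1)\varphi^{-1}\rfloor$; feeding this into the construction identity $\Delta(J_n(x))=t_1(v_n)+t_1(v_{n-t_1(v_n)})$ and simplifying everything in terms of the single quantity $f:=\{(n+1)\varphi^{-1}\}$ yields $\Delta(J_n(x))=\lfloor(n+2)\varphi^{-1}\rfloor-1$ (one checks both this expression and $\Delta(J_n(x))$ equal $\lfloor(n+1)\varphi^{-1}\rfloor$ when $f>\varphi^{-2}$ and $\lfloor(n+1)\varphi^{-1}\rfloor-1$ when $f<\varphi^{-2}$), which settles Conjecture~\ref{conj1} along the way.

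Given the closed form, for $n\ge2$ one has $\Delta(J_n(x))=m$ iff $(m+1)\varphi\le n+2<(m+2)\varphi$. Since $\varphi$ is irrational, the number of admissible $n$ equals the number of integers in $[(m+1)\varphi,(m+2)\varphi)$, namely $\lfloor(m+2)\varphi\rfloor-\lfloor(m+1)\varphi\rfloor$, which is $1$ or $2$ because the interval has length $\varphi\in(1,2)$; and for every $m\ge1$ its least integer is at least $\lceil2\varphi\rceil=4$, so the constraint $n\ge2$ is automatic. Hence $m\ge1$ occurs as a \emph{unique} maximum degree iff $\lfloor(m+2)\varphi\rfloor-\lfloor(m+1)\varphi\rfloor=1$. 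The key lemma is then: for every integer $k\ge1$, $\lfloor(k+1)\varphi\rfloor-\lfloor k\varphi\rfloor=1$ exactly when $k$ lies in the upper Wythoff sequence $\{\lfloor t\varphi^2\rfloor:t\ge1\}$. To prove it, note $\lfloor(k+1)\varphi\rfloor-\lfloor k\varphi\rfloor=\lfloor\{k\varphi\}+\varphi\rfloor$, which equals $1$ iff $\{k\varphi\}<2-\varphi=\varphi^{-2}$; on the other hand $k$ is a lower Wythoff number $\lfloor s\varphi\rfloor$ iff the open interval $(k\varphi^{-1},(k+1)\varphi^{-1})$ of length $\varphi^{-1}<1$ contains an integer, iff $\lfloor(k+1)\varphi^{-1}\rfloor-\lfloor k\varphi^{-1}\rfloor=1$, iff $\{k\varphi^{-1}\}\ge1-\varphi^{-1}=\varphi^{-2}$; since $\varphi^{-1}=\varphi-1$ forces $\{k\varphi^{-1}\}=\{k\varphi\}$, and since by Beatty's theorem the lower and upper Wythoff sequences partition $\mathbb N$, we conclude $k$ is upper Wythoff iff $\{k\varphi\}<\varphi^{-2}$, which matches the first computation.

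Putting $k=m+1$ and combining, $m\ge1$ is a unique maximum degree iff $m+1=\lfloor t\varphi^2\rfloor$ for some $t\ge1$, i.e. iff $m=\lfloor t\varphi^2\rfloor-1=\lfloor t(\tfrac{1+\sqrt5}{2})^2\rfloor-1$, which is precisely the $t$-th term of A003622. Conversely, because $t\mapsto\lfloor t\varphi^2\rfloor$ is strictly increasing with first value $2$, the numbers $m_t:=\lfloor t\varphi^2\rfloor-1$ are pairwise distinct and begin at $m_1=1$, and each $m_t$ is realized (uniquely) by the graph $J_{n_t}(x)$ with $n_t=\lceil(m_t+1)\varphi\rceil-2$ — indeed $n_t+2=\lceil(m_t+1)\varphi\rceil$ lies in $[(m_t+1)\varphi,(m_t+2)\varphi)$ since $\lceil x\rceil<x+1\le x+\varphi$. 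This gives the asserted $p$-graphicality for $p(G)=\Delta(G)_{\mathrm{unique}}$ and $\mathcal F=\{J_n(x):n\ge1\}$.

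The number-theoretic core (the multiplicity count and the Beatty lemma) is short and robust, so the genuine obstacle sits in the first step. If one is content to cite Conjecture~\ref{conj1}, then what is proved is really the implication Conjecture~\ref{conj1}$\,\Rightarrow\,$Conjecture~\ref{conj2}, and Conjecture~\ref{conj1} itself remains to be established. For a self-contained argument the awkward point is not the floor algebra but the construction identity $\Delta(J_n(x))=t_1(v_n)+t_1(v_{n-t_1(v_n)})$: one must verify both that $v_{n-t_1(v_n)}$ does attain the maximum degree among \emph{all} vertices of the finite graph $J_n(x)$, and that its forward degree there is not truncated by the finiteness of $J_n(x)$, i.e. that $(n-t_1(v_n))-t_1(v_{n-t_1(v_n)})\ge t_1(v_n)$ — an inequality which (as the data show) holds with equality infinitely often and therefore needs genuine care rather than a hand-wave.
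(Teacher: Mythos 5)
The statement you were asked to prove is left entirely open in the paper: Conjecture~\ref{conj2} carries no proof, and the only related material is the unproven ``Motivation'' for Conjecture~\ref{conj1}, where the identity $\Delta(J_n(x)) = t_1(v_n) + t_1(v_{n-t_1(v_n)})$ is asserted as ``a trivial fact'' and the remaining floor-function identity is stated as the open problem. So there is no proof of the paper's to compare against; your proposal goes strictly further than the text. Your number-theoretic core is correct and I could verify each step: with $\varphi=\tfrac{1+\sqrt5}{2}$ the closed form of Conjecture~\ref{conj1} gives $\Delta(J_n(x))=m$ iff $n+2\in[(m+1)\varphi,(m+2)\varphi)$, the multiplicity of $m$ is $\lfloor(m+2)\varphi\rfloor-\lfloor(m+1)\varphi\rfloor\in\{1,2\}$, and your Beatty lemma (that this difference is $1$ exactly when $m+1$ is an upper Wythoff number, via $\{k\varphi\}<\varphi^{-2}$ and the complementarity of the two Wythoff sequences) is sound; the conclusion $m=\lfloor t\varphi^2\rfloor-1$ is exactly A003622, matching the data $1,4,6,9,12,14,17,\dots$ in Table~1. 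Your reduction of Conjecture~\ref{conj1} to the construction identity also checks out: writing $q=\lfloor(n+1)\varphi^{-1}\rfloor$ one gets $(q+1)\varphi^{-1}=(n-q)+(1-f)\varphi$ with $f=\{(n+1)\varphi^{-1}\}$, and the two-case analysis you describe follows.

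The one genuine gap is the one you yourself flag, and it should be stated plainly: nothing in your argument (or in the paper) establishes that $v_{n-t_1(v_n)}$ is a maximum-degree vertex of the \emph{finite} graph $J_n(x)$ and that its forward degree in $J_n(x)$ equals $t_1(v_n)$ rather than being cut off by the truncation at $v_n$. Until that structural fact about the construction in Section~2 is proved, what you have is a correct chain of implications (construction identity) $\Rightarrow$ Conjecture~\ref{conj1} $\Rightarrow$ Conjecture~\ref{conj2}, not an unconditional proof of either conjecture. That is nevertheless a substantial advance over the paper, which offers no argument at all for Conjecture~\ref{conj2}: you have isolated the entire difficulty in a single finite-graph degree statement and disposed of all the floor-function content rigorously.
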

\begin{conjecture}\label{conj3}
The order $n \geq 3$ of the linear Jaco graphs for which a maximum degree $\Delta(J_n(x))$ appears once (unique) is given by the sequence A035336: $a(t) = 2 \times \lfloor \frac{t(1+\sqrt{5})}{2}\rfloor +(t-1)$, $t = 3,4,5,\dots$

Furthermore, it implies that sequence A035336 is $p$-graphical where $p(G) = |V((G)|_{unique~\Delta(G)}$ and $\mathcal{F} \in \{G:G=J_n(x), n = 1,2,3,\dots\}$.
\end{conjecture}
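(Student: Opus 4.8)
\textit{Proposed approach:} Write $\phi=\tfrac{1+\sqrt5}{2}$, let $a(k)=\lfloor k\phi\rfloor$ and $b(k)=\lfloor k\phi^{2}\rfloor$ be the lower and upper Wythoff sequences, and put $\Delta(n)=\Delta(J_n(x))$. The plan is to reduce the statement, via Conjecture~\ref{conj1}, to a purely arithmetic question about pairs of consecutive terms of $a(\cdot)$, and then to settle that question by a counting argument. Two elementary facts come first. (i) The sequence $(\Delta(n))_{n\ge1}$ is non-decreasing with all increments in $\{0,1\}$: by the construction method, passing from $J_n(x)$ to $J_{n+1}(x)$ adds only the vertex $v_{n+1}$, whose degree is $t_1(v_{n+1})\le t_1(v_n)+1=\deg_{J_n(x)}(v_n)+1\le\Delta(n)+1$ (the middle equality because $v_n$ has no forward edge in $J_n(x)$, the first inequality because $t_1(v_i)=\lfloor 2(i+1)/(3+\sqrt5)\rfloor$ has increments in $\{0,1\}$), while every other vertex gains at most one edge and loses none; monotonicity holds since $J_n(x)\subseteq J_{n+1}(x)$. (ii) Consequently the positive integers split into the maximal runs on which $\Delta$ is constant, so a value is attained by a \emph{unique} order $n$ iff $\Delta(n)>\Delta(n-1)$ and $\Delta(n+1)>\Delta(n)$, i.e. iff $\Delta$ strictly increases at both $n$ and $n+1$.

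Next I would invoke Conjecture~\ref{conj1} in the equivalent form $\Delta(n)=\lfloor\tfrac{n+2}{\phi}\rfloor-1$ (using $\tfrac{2}{1+\sqrt5}=\tfrac1\phi$); this is the only substantive input, and all the argument uses is the resulting jump criterion. Since $0<\tfrac1\phi<1$, $\Delta$ increases at $n$ iff the interval $(\tfrac{n+1}{\phi},\tfrac{n+2}{\phi}]$ contains an integer; multiplying through by $\phi$ and using that $\phi$ is irrational (so $k\phi\notin\mathbb{Z}$), this is equivalent to the existence of $\ell\ge1$ with $n+1<\ell\phi<n+2$, i.e. to $n+1\in\mathcal W:=\{a(k):k\ge1\}$. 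Therefore $n$ is a ``unique-$\Delta$'' order iff $n+1$ and $n+2$ both lie in the lower Wythoff sequence, that is, iff they are consecutive members $a(k),a(k+1)$ with $a(k+1)-a(k)=1$.

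It then remains to identify the indices $k$ with $a(k+1)-a(k)=1$ and to evaluate $a$ at those indices. Writing $k\phi=a(k)+\{k\phi\}$ and using $\phi^{2}=\phi+1$ gives $a(k+1)-a(k)=\lfloor\{k\phi\}+\phi\rfloor\in\{1,2\}$, which equals $1$ exactly when $\{k\phi\}<2-\phi=\tfrac1{\phi^{2}}$. I would then show $\{k:\{k\phi\}<\tfrac1{\phi^{2}}\}=\{b(t):t\ge1\}$ by a counting argument: summing the increments up to index $K$ shows that the number of $k\le K$ with $a(k+1)-a(k)=1$ equals $(2K+1)-a(K+1)=K-\lfloor(K+1)/\phi\rfloor$, while Beatty's theorem (applicable since $\tfrac1\phi+\tfrac1{\phi^{2}}=1$) says $\mathcal W$ and $\{b(t):t\ge1\}$ partition $\mathbb{Z}^{+}$, whence $\#\{t:b(t)\le K\}=K-\lfloor(K+1)/\phi\rfloor$ as well; two subsets of $\mathbb{Z}^{+}$ with equal counting functions coincide. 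Hence the unique-$\Delta$ orders are exactly $n=a(b(t))-1$, $t\ge1$. Finally $a(b(t))=\lfloor\phi(t+a(t))\rfloor$ since $b(t)=t+a(t)$; substituting $\phi a(t)=t\phi+t-\phi\{t\phi\}$ and $t\phi=a(t)+\{t\phi\}$ gives $a(b(t))=2a(t)+t+\lfloor(2-\phi)\{t\phi\}\rfloor=2\lfloor t\phi\rfloor+t$, the last floor vanishing because $(2-\phi)\{t\phi\}\in[0,2-\phi)\subset[0,1)$. Thus $n=2\lfloor\tfrac{t(1+\sqrt5)}{2}\rfloor+(t-1)$, which is sequence A035336.

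The main obstacle is the first ingredient: everything above is conditional on Conjecture~\ref{conj1} (equivalently, on knowing that $\Delta$ jumps precisely at those $n$ with $n+1\in\mathcal W$). An unconditional proof would have to derive this jump pattern directly from the construction, that is, control the one-step behaviour of $n\mapsto t_1(v_n)+t_1(v_{\,n-t_1(v_n)})$ --- the quantity appearing in the Motivation of Conjecture~\ref{conj1} --- which essentially amounts to proving a difference version of that conjecture and is where the real work lies; the Wythoff counting lemma and the two fractional-part estimates are routine by comparison. One bookkeeping point should also be settled: the arithmetic yields $t=1\mapsto n=2$ and $t=2\mapsto n=7$ as genuine unique-$\Delta$ orders (indeed $\Delta(J_7(x))=4$ occurs only once), so the index range in the statement is better read as $t\ge2$, giving all unique-$\Delta$ orders $n\ge3$ as $7,10,15,20,\dots$; ``$t=3,4,5,\dots$'' silently omits $n=7$.
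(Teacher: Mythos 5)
The paper offers no proof of this statement: it is left as an open conjecture (the author explicitly defers all proofs and disproofs to others), and unlike Conjecture~\ref{conj1} it is not even accompanied by a motivation paragraph. There is therefore nothing in the paper to compare your argument against, so I assess it on its own terms, and it holds up. The reduction is clean and every arithmetic step checks: the monotonicity of $\Delta(n)$ with increments in $\{0,1\}$ is correctly derived from the construction (the new vertex $v_{n+1}$ has degree $t_1(v_{n+1})\le t_1(v_n)+1\le\Delta(n)+1$ and each old vertex gains at most one edge); the criterion ``$\Delta$ jumps at $n$ iff $n+1=\lfloor k\phi\rfloor$ for some $k$'' follows correctly from $\Delta(n)=\lfloor(n+2)/\phi\rfloor-1$ together with the irrationality of $\phi$; the identification of $\{k:a(k+1)-a(k)=1\}$ with the upper Wythoff sequence via equal counting functions is valid (and standard); and the closing evaluation $a(b(t))=2a(t)+t$ is the classical Wythoff identity, which your fractional-part computation proves correctly. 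I also verified your bookkeeping remark against Table~1: $\Delta(J_7(x))=4$ is attained only at $n=7$, and $7=2\lfloor2\phi\rfloor+1$ is the $t=2$ term of the stated formula, so the range $t=3,4,5,\dots$ in the conjecture does silently omit a legitimate unique-$\Delta$ order of size $n\ge3$; the index range should read $t\ge2$ (or $t\ge1$ if $n=2$ is admitted). That is a genuine and correct refinement of the statement.

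The one real gap is the one you name yourself: the entire argument is conditional on Conjecture~\ref{conj1}, which is likewise unproven in the paper. What you have is therefore a correct proof of the implication ``Conjecture~\ref{conj1} implies Conjecture~\ref{conj3} (with corrected index range)'' rather than of Conjecture~\ref{conj3} itself. To make it unconditional you would need to establish $\Delta(J_n(x))=t_1(v_n)+t_1\bigl(v_{n-t_1(v_n)}\bigr)=\lfloor(n+2)/\phi\rfloor-1$, i.e.\ the floor-function identity recorded in the Motivation following Conjecture~\ref{conj1}; that identity is itself amenable to the same Beatty-sequence manipulations you already use, so proving it first would close the whole chain.
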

Read Conjectures \ref{conj2} and \ref{conj3} in conjunction with columns $5$ and $8$ in Table 1 to understand the subtle difference between the conjectured results. Next we conjecture for which order $n \geq 1$ a linear Jaco graph contains a $\Delta$-set (or Jaconian set) $X$ with $|X| = 1$, $|X| = 2$ or $|X| = 3$, respectively.

\begin{conjecture}\label{conj4}
The order $n \geq 3$ of the linear Jaco graphs for which:

(a) The $\Delta$-set $X$ has $|X| = 1$ is given by the sequence $a(t) = w(t+1) - 2$, $t = 1,2,3,\dots$ and where sequence A001950 is the upper Wythoff sequence (a Beatty sequence) given by
\begin{center}
$w(1) = \lfloor \frac{t(1+\sqrt{5})^2}{4}\rfloor$, $t = 1,2,3,\dots$
\end{center}

(b) The $\Delta$-set $X$ has $|X| = 2$ is given by the sequence A057843: $a(t) = \lfloor \frac{t(1+\sqrt{5})^2}{4}\rfloor -3$, $ t = 2,3,4,\dots$

(c) The $\Delta$-set $X$ has $|X| = 3$ is given by the adapted sequence A134859: $a(t) = 2 \times \lfloor \frac{t(1+\sqrt{5})^2}{4}\rfloor - (t+2)$, $ t = 2,3,4,\dots$

Furthermore, it implies that sequences A001950, A057843, A134859 are $p$-graphical where
\begin{equation*} 
p(G) =
\begin{cases}
|X|=1;\\
|X|=2;\\
|X|=3.\\
\end{cases}
\end{equation*} 

respectively, and $\mathcal{F} \in \{G:G=J_n(x), n = 1,2,3,\dots\}$.
\end{conjecture}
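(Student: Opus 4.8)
\textit{Towards a proof of Conjecture \ref{conj4}:} The plan is to reduce all three parts to elementary identities for the lower and upper Wythoff (Beatty) sequences, written here $a(m)=\lfloor m\varphi\rfloor$ (A000201) and $b(m)=\lfloor m\varphi^{2}\rfloor=w(m)$ (A001950), $\varphi=\frac{1+\sqrt5}{2}$. First I would extract an exact degree formula from the step-count construction: when $v_i$ is processed it already carries its $t_1(v_i)=\lfloor(i+1)/\varphi^{2}\rfloor$ back-edges (Wagner's value, which is the same in $J_n(x)$ as in $J_\infty(x)$ for $i\le n$, since back-edges to $v_i$ depend only on the reaches of earlier vertices, unchanged up to index $n$), and it then receives forward edges to $v_{i+1},\dots,v_{i+t}$ with $t$ maximal subject both to $\deg\le i$ and to $i+t\le n$, whence $t=\min\{i-t_1(v_i),\,n-i\}$ and
\[
\deg_{J_n(x)}(v_i)=\min\{\,i,\ n-i+t_1(v_i)\,\},\qquad 1\le i\le n .
\]
Put $f(i):=2i-t_1(v_i)$, so $\deg_{J_n(x)}(v_i)=i$ exactly when $f(i)\le n$. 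As $t_1$ is non-decreasing with unit steps, $f$ is strictly increasing with increments in $\{1,2\}$ and $n-i+t_1(v_i)$ is non-increasing; hence for $n\ge2$ one has $\Delta(J_n(x))=i^{\star}$, where $i^{\star}=i^{\star}(n):=\max\{i:f(i)\le n\}$, and the Jaconian set is the contiguous block $X=\{v_{i^{\star}}\}\cup\{v_i: i^{\star}<i\le n,\ n-i+t_1(v_i)=i^{\star}\}$.

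Next I would set up the Wythoff dictionary that drives everything. From $1/\varphi^{2}=2-\varphi$ one gets $f(i)=\lfloor(i+1)\varphi\rfloor-1=a(i+1)-1$; moreover $t_1$ increases at index $j$ (i.e.\ $t_1(v_j)>t_1(v_{j-1})$) exactly when $j=\lfloor m\varphi^{2}\rfloor$ for some $m$, that is, exactly when $j$ is an upper Wythoff number, and $t_1(v_{j+2})-t_1(v_j)\le1$, so $t_1$ never increases at two consecutive indices. Besides the trivialities $a(\mathbb{N})\sqcup b(\mathbb{N})=\mathbb{N}$ and $b(m)=a(m)+m$, I would use the compound identities $a(a(m))=b(m)-1$, $a(b(m))=a(m)+b(m)$, $b(b(m))=a(b(m))+b(m)$, $a(a(a(m)))=2b(m)-m-2$, and the fact that the gap $a(m+1)-a(m)$ equals $2$ when $m$ is a lower Wythoff number and $1$ when $m$ is an upper Wythoff number (so $a(m)+1$ is a lower, resp.\ upper, Wythoff number accordingly) — all of which come from the fractional-part / Zeckendorf description of $a,b$.

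Now put $\delta=\delta(n):=n-f(i^{\star})$; from $f(i^{\star}+1)>n$ and $f(i^{\star}+1)-f(i^{\star})\le2$ one gets $\delta\in\{0,1\}$, and evaluating $n-i+t_1(v_i)$ at $i=i^{\star},i^{\star}+1,i^{\star}+2$ (with ``no two consecutive increases of $t_1$'' showing the block ends) gives: for $\delta=0$, $|X|=1$ if $i^{\star}+1$ is a lower Wythoff number and $|X|=2$ if it is an upper Wythoff number; for $\delta=1$ (which forces $i^{\star}+1$ to be a lower Wythoff number), $|X|=2$ if $i^{\star}+2$ is a lower Wythoff number and $|X|=3$ if it is an upper Wythoff number. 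In particular $|X|\le3$ for all $n\ge3$, matching Table~1. Since $\delta=0\iff n+1=a(i^{\star}+1)$ and $\delta=1\iff n=a(i^{\star}+1)$, substituting the above and the compound identities yields
\[
|X|=1\iff n+1=a(a(m))=b(m)-1\ (m\ge2),\quad\text{i.e.}\quad n=b(m)-2=w(m)-2\ (m\ge2),
\]
which is the sequence $w(t+1)-2$ of part (a);
\[
|X|=3\iff n=a(a(a(m)))=2b(m)-m-2\ (m\ge2),
\]
which is the adapted A134859 of part (c); and the two $\delta$-cases of $|X|=2$ combine to $n+1\in a(b(\mathbb{N}))\cup b(b(\mathbb{N}))$, a disjoint union since $a(\mathbb{N})\cap b(\mathbb{N})=\emptyset$, so part (b) amounts to the Beatty-theoretic identity
\[
a(b(\mathbb{N}))\ \sqcup\ b(b(\mathbb{N}))\ =\ \{\,w(t)-2:\ t\ge2\,\},
\]
which is A057843 after subtracting $1$ (note $w(t)-2-1=b(t)-3$).

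The hard part will be that last identity: showing that applying $a$ and $b$ to the upper Wythoff numbers and merging the two resulting sequences reproduces exactly the upper Wythoff numbers shifted down by $2$ (from $w(2)$ on). I expect to prove it through the Zeckendorf renormalisation — $a$ shifts every index of a Zeckendorf expansion up by one and $b=a+\mathrm{id}$ shifts by two — so that $a(b(m))$ and $b(b(m))$ have prescribed low-order Zeckendorf shapes whose union is precisely the shape of $w(t)-2$; the same bookkeeping, shorter, underlies $a(a(m))=b(m)-1$ and $a(a(a(m)))=2b(m)-m-2$ needed for (a) and (c). The remaining work is at the boundary: $J_1(x)$ has $|X|=1$ but is not of the form $b(m)-2$, while $J_2(x)$ has $|X|=2$ and equals the value $w(2)-3=2$ that the part (b) formula returns at $t=2$, so the stated ranges should read $m\ge2$ in (a),(c) and $t\ge2$ in (b) (the blanket ``$n\ge3$'' being read accordingly), and $n=1,2$ I would check by hand. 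A minor secondary point is to derive the truncation $\min\{\cdot,\,n-i\}$ in the degree formula cleanly from step (b) of the construction — which needs only that $v_{n+1}$ does not exist, not the stronger (true) fact that the back-neighbourhoods of a linear Jaco graph are intervals, though that fact would streamline the exposition.
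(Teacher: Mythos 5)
The paper offers no proof of Conjecture \ref{conj4} at all --- it is stated as an open conjecture and the conclusion explicitly leaves proof or disproof ``for other sophisticated minds'' --- so your proposal cannot be compared to an argument in the text; it must stand on its own. On its own it is a sound and essentially correct skeleton, and it goes well beyond the paper. Your degree formula $\deg_{J_n(x)}(v_i)=\min\{i,\,n-i+t_1(v_i)\}$ does follow from step (b) of the construction together with the stability of $t_1(v_i)$ for $i\le n$; the reduction $f(i)=2i-t_1(v_i)=\lfloor(i+1)\varphi\rfloor-1$ via $1/\varphi^{2}=2-\varphi$ is right; the dichotomy $\delta\in\{0,1\}$ and the four-way case analysis (using that $t_1$ increases exactly at upper Wythoff indices and never at two consecutive indices) correctly yields $|X|\le 3$ and the characterisations $|X|=1\iff n=b(m)-2$, $|X|=3\iff n=a(a(a(m)))=2b(m)-m-2$, $|X|=2\iff n+1\in a(b(\mathbb{N}))\sqcup b(b(\mathbb{N}))$ --- I checked all of these against Table 1 through $n=32$ and they agree, including the boundary repairs you flag at $n=1,2$.

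The one genuine gap is the piece you yourself defer: the compound identities $a(a(m))=b(m)-1$, $a(b(m))=a(m)+b(m)$, $b(b(m))=a(m)+2b(m)$ and, above all, the merging identity $a(b(\mathbb{N}))\sqcup b(b(\mathbb{N}))=\{b(t)-2:t\ge 2\}$ are asserted with only a promissory note about Zeckendorf renormalisation. As written, part (b) is therefore not proved. The good news is that these are classical facts (the four compound Wythoff sequences $AA,AB,BA,BB$ partition $\mathbb{N}$, with $AA=\{b(m)-1\}$ and $BA=\{a(b(m))-1\}$, which is due to Carlitz, Scoville and Hoggatt), so the identity you need is equivalent to $\mathbb{N}\setminus(AA\sqcup BA)=\{b(t)-2:t\ge2\}$ and can be cited or re-derived in a few lines; alternatively, once (a) and (c) are established, (b) follows from $|X|\in\{1,2,3\}$ by complementation, at the cost of a partition identity of the same flavour. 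Either way you should make one of these routes explicit rather than leaving the crux as an expectation; with that done, your argument would settle parts (a)--(c) of the conjecture (the ``$p$-graphical'' addendum then being immediate from the definition in the introduction).
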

From Table 1 column 5 it follows that the vertices which do not serve as a unique vertex with maximum degree hence, the $v_i^{,s}$ such that a $\Delta$-set $\{v_i\}$ does not exist are given by $i = 4,6,9,12,14,17,19$. By adding $"1"$ as an artificial first term the sequence $i = "1",4,6,9,12,14,17,19$ underpins the next conjecture.
\begin{conjecture}\label{conj5}
The vertices $v_i$ which do not yield a $\Delta$-set $X$, $|X| = 1$ in some finite linear Jaco graph $J_n(x)$ are yielded by sequence A003622 i.e. the Wythoff compound sequence AA with close formula: $i = \lfloor \frac{i\times(1+ \sqrt{5})^2}{4}\rfloor - 1$, $i = 2,3,4,\dots$

Furthermore, it implies that sequences A003622 (excluding the first term) is $p$-graphical where $p(G) = i$ and $\{v_i\} \neq \Delta$-set of $G$ and $\mathcal{F} \in \{G:G=J_n(x), n = 1,2,3,\dots\}$. 
\end{conjecture}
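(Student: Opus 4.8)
The plan is to compute the degree of every vertex of a finite linear Jaco graph $J_n(x)$, read off precisely when the Jaconian set reduces to a single vertex, and then translate the resulting arithmetic condition into the Wythoff compound sequence AA. Throughout put $\varphi=\frac{1+\sqrt5}{2}$ and $d(i)=t_1(v_i)=\lfloor\frac{2(i+1)}{3+\sqrt5}\rfloor$, and note that $\frac{2}{3+\sqrt5}=\frac{3-\sqrt5}{2}=\varphi^{-2}\in(0,1)$, so that $d(i+1)-d(i)\in\{0,1\}$ and $d(i)\le i-1$ for every $i\ge1$. From the construction method, for $i\le n$ the back--edges at $v_i$ are exactly those of $J_\infty(x)$ (truncation only ever deletes an edge running to an index exceeding $n$), so $t_1^{J_n}(v_i)=d(i)$, and $v_i$ ends up with $\min\{i-d(i),\,n-i\}$ forward edges. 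Hence
\[
\deg_{J_n}(v_i)=d(i)+\min\{\,i-d(i),\ n-i\,\}=\min\{\,i,\ d(i)+n-i\,\}\qquad(1\le i\le n).
\]
I would also record that $i\mapsto 2i-d(i)$ is strictly increasing, that $g(i):=d(i)+n-i$ is non-increasing in $i$, and that $i\le g(i)$ is equivalent to $2i-d(i)\le n$.

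Next, for $n\ge2$ set $i^{\ast}=i^{\ast}(n)=\max\{i:\ 2i-d(i)\le n\}$; this is well defined with $1\le i^{\ast}<n$, and $\{i:\ 2i-d(i)\le n\}=\{1,\dots,i^{\ast}\}$. The degree formula then gives $\deg_{J_n}(v_i)=i$ for $i\le i^{\ast}$ and $\deg_{J_n}(v_i)=g(i)\le g(i^{\ast}+1)\le i^{\ast}$ for $i>i^{\ast}$, so $\Delta(J_n(x))=i^{\ast}$ and the Jaconian set is the block of consecutive vertices
\[
\{v_{i^{\ast}}\}\cup\{\,v_j:\ j>i^{\ast},\ d(j)+n-j=i^{\ast}\,\},
\]
which is consistent with the ``trivial fact'' quoted after Conjecture~\ref{conj1}. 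By monotonicity of $g$ this block equals $\{v_{i^{\ast}}\}$ precisely when $g(i^{\ast}+1)<i^{\ast}$, i.e. when $n\le 2i^{\ast}-d(i^{\ast}+1)$; while $i^{\ast}(n)=i$ holds exactly for $n\in[\,2i-d(i),\ 2i+1-d(i+1)\,]$. Since any singleton Jaconian set of $J_n(x)$ must be $\{v_{i^{\ast}(n)}\}$, intersecting the two $n$--ranges shows that $\{v_i\}$ is the Jaconian set of some $J_n(x)$ with $n\ge2$ iff $2i-d(i)\le 2i-d(i+1)$, i.e. iff $d(i+1)=d(i)$, in which case the unique such graph is $J_{2i-d(i)}(x)$. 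As the Jaconian set of $J_1(x)$ is trivially $\{v_1\}$, the indices $i$ for which no finite $J_n(x)$ has Jaconian set $\{v_i\}$ are exactly those with $i\ge2$ and $d(i+1)=d(i)+1$.

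It then remains to identify $\mathcal B:=\{\,i\ge2:\ d(i+1)=d(i)+1\,\}$ with sequence A003622 with its first term deleted. Since $d(i)=\lfloor(i+1)\varphi^{-2}\rfloor$, the floor $\lfloor(i+1)\varphi^{-2}\rfloor$ strictly increases at step $i$ iff the interval $\bigl((i+1)\varphi^{-2},\ (i+2)\varphi^{-2}\bigr]$ of length $\varphi^{-2}<1$ contains an integer $m$, and because $\varphi^{-2}$ is irrational it then contains exactly one; multiplying the bounds $(i+1)\varphi^{-2}<m\le(i+2)\varphi^{-2}$ by $\varphi^{2}$ rearranges them to $m\varphi^{2}-2\le i<m\varphi^{2}-1$, i.e. (after adding $1$) to $i+1$ being the unique integer of $[\,m\varphi^{2}-1,\ m\varphi^{2})$, namely $i+1=\lfloor m\varphi^{2}\rfloor$. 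Thus $d(i+1)=d(i)+1\iff i=\lfloor m\varphi^{2}\rfloor-1$ for some $m\ge1$; the value $m=1$ gives $i=\lfloor\varphi^{2}\rfloor-1=1\notin\mathcal B$, so $\mathcal B=\{\,\lfloor m\varphi^{2}\rfloor-1:\ m\ge2\,\}$. Since $\varphi^{2}=\frac{(1+\sqrt5)^2}{4}$ this is exactly the conjectured formula for $t=m=2,3,4,\dots$, i.e. A003622 excluding its first term; equivalently $\mathcal B$ is the complement in $\{1,2,3,\dots\}$ of the list $1,2,3,5,7,8,10,\dots$ of indices that do arise as singleton Jaconian sets.

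The steps giving the degree formula and the description of the Jaconian set are essentially bookkeeping powered by the two inequalities $0\le d(i+1)-d(i)\le1$, and I anticipate no real difficulty there. The substantive point is the two--line dichotomy ``$\{v_i\}$ is a Jaconian set for some $J_n(x)$ $\iff$ $d$ has no jump at $i$''; the one delicate case is $i=1$, where $J_1(x)$ is degenerate and its Jaconian set is trivially a singleton even though $d(2)=d(1)+1$ --- this is exactly why the first term of A003622 must be removed, and a careful write-up has to isolate this case. The final identification is a routine Beatty/floor-function computation, whose only subtlety is invoking the irrationality of $\varphi^{2}$ to guarantee that the unit-length interval contains precisely one integer.
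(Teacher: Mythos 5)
The paper offers no proof of this statement at all --- it is posed as an open conjecture, supported only by the data in Table~1, and the author explicitly leaves all proofs ``for other sophisticated minds to pursue.'' Your argument therefore goes genuinely beyond the paper, and as far as I can check it is correct and complete, conditional on the one external input the paper itself supplies, namely Wagner's formula $t_1(v_i)=\lfloor 2(i+1)/(3+\sqrt5)\rfloor$ (sequence A060144). Granting that, your chain of deductions holds up: the degree formula $\deg_{J_n}(v_i)=\min\{i,\,d(i)+n-i\}$ follows from the construction because truncation at $v_n$ never removes a back-edge of a vertex $v_i$ with $i\le n$; the monotonicity of $2i-d(i)$ and of $g(i)=d(i)+n-i$ correctly forces $\Delta(J_n(x))=i^{\ast}(n)$ with the Jaconian set a consecutive block starting at $v_{i^{\ast}}$ (this reproduces every row of Table~1 I spot-checked, e.g.\ $n=8,9$); intersecting the interval $[\,2i-d(i),\,2i+1-d(i+1)\,]$ with $n\le 2i-d(i+1)$ gives the clean dichotomy that $\{v_i\}$ occurs as a Jaconian set (for $n\ge 2$, uniquely at $n=2i-d(i)$) if and only if $d(i+1)=d(i)$; and the final Beatty computation identifying $\{i\ge 2: d(i+1)=d(i)+1\}$ with $\{\lfloor m\varphi^2\rfloor-1: m\ge 2\}$ is sound, with the irrationality of $\varphi^2$ correctly invoked and the $m=1$ term correctly matching the degenerate $J_1(x)$ case that forces the deletion of the first term of A003622. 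Two small points for the write-up: you should state explicitly that the back-degree claim ($t_1^{J_n}(v_i)=d(i)$ for $i\le n$) is an induction on $i$ using $v_j\to v_i$ iff $i\le 2j-t_1(v_j)$, rather than leave it as a parenthetical; and you should flag that your result simultaneously proves the closed form $\Delta(J_n(x))=i^{\ast}(n)$ and hence bears on Conjectures~\ref{conj1}--\ref{conj4} as well, which is a bonus the paper does not anticipate.
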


\subsection{Minimum domination set:} In general there exist numerous $\gamma$-sets for $J_n(x)$, $n \geq 2$. Generally, a vertex $v_i \in V(J_n(x))$ has a neighbor $v_j$ with maximum subscript $j >i$. The vertex $v_j$ is called the \textit{upper neighbor} of $v_i$. For $i\geq 2$ a vertex $v_i \in V(J_n(x))$ also has a neighbor $v_r$ with minimum subscript $r < i$. The vertex $v_r$ is called the \textit{lower neighbor} of $v_i$. 

\begin{figure}[h]
\centering
\begin{tikzpicture}[scale=1.0]
\tikzstyle{every node}=[draw,shape=circle];
\node (v0) at (0:3) [] {$v_1$};
\node (v1) at (45:3) [] {$v_2$};
\node (v2) at (2*45:3) [] {$v_3$};
\node (v3) at (3*45:3) [] {$v_4$};
\node (v4) at (4*45:3) [] {$v_5$};
\node (v5) at (5*45:3) [] {$v_6$};
\node (v6) at (6*45:3) [] {$v_7$};
\node (v7) at (7*45:3) [] {$v_8$};
\draw (v0) -- (v1)
(v1) -- (v2)
(v2) -- (v3)
(v2) -- (v4)
(v3) -- (v4)
(v3) -- (v5)
(v3) -- (v6)
(v4) -- (v5)
(v4) -- (v6)
(v4) -- (v7)
(v5) -- (v6)
(v5) -- (v7)
(v6) -- (v7)
;
\end{tikzpicture}
\caption{Jaco graph $J_8(x)$}\label{fig1}
\end{figure}
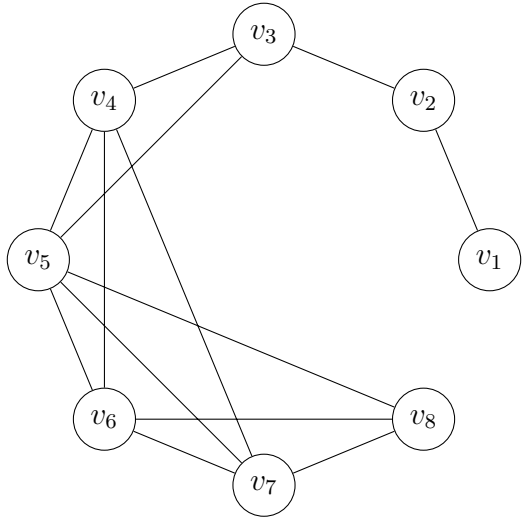

From Figure \ref{fig1} it is easy to see that $diam(J_8(x)) = 4$ on path $P_{diam}(J_8(x)) = v_1v_2v_3v_5v_8$ and the path $P_d(J_8(x)) = v_1v_2v_3v_4v_7v_8$ has length $5$. For the two paths \textit{per se} it follows that $\gamma(P_{diam}(J_8(x))) = \gamma(P_d(J_8(x))) = 2$. Also, $\gamma(J_8(x)) = 2$. It is equally easy to verify that for $J_{15}(x)$ we have $diam(J_{15}(x)) = 6$ on $P_{diam}(J_{15}(x)) = v_1v_2v_3v_5v_8v_{13}v_{15}$ and the path $P_d(J_{15}(x)) = v_1v_2v_3v_4v_7v_{11}v_{15}$ has length $6$. For the two paths \textit{per se} it follows that $\gamma(P_{diam}(J_{15}(x))) = \gamma(P_d(J_{15}(x))) = 3$. Also, $\gamma(J_{15}(x)) = 3$. However, the $\gamma$-set for $P_{diam}(J_{15}(x))$ i.e. $\{v_3,v_8,v_{15}\}$ does not dominate vertex $v_4$. Therefore, it does not dominate $J_{15}(x)$. On the other hand the $\gamma$-set for $P_d(J_{15}(x))$ i.e. $\{v_2,v_7,v_{15}\}$ is a minimum dominating set of $J_{15}(x)$.
\begin{observation}\label{obs1}
For any finite linear Jaco graph $J_n(x)$, $n \geq 2$ there exists a pair of vertices i.e. $v_1,v_n$ for which a minimal $(v_1,v_n)$-path (not necessarily a $diam$-path) i.e. $P_d(J_n(x))$ exists such that a $\gamma$-set of $P_d(J_n(x))$ is a $\gamma$-set of $J_n(x)$. We call the path $P_d(J_n(x))$ the \textit{primary minimal dom-path}. 
\end{observation}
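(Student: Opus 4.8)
\emph{Proof proposal.} The plan is to translate domination of $J_n(x)$ into a one-dimensional covering problem, build a canonical minimum dominating set by a greedy left-to-right sweep, and then thread a $(v_1,v_n)$-path through that set whose number of vertices is forced to be exactly right for its domination number to equal $\gamma(J_n(x))$.

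First I would record the \emph{interval property}: from the construction of $J_n(x)$ together with $t_1(v_i)=\lfloor 2(i+1)/(3+\sqrt5)\rfloor$, every closed neighbourhood is a block of consecutive subscripts, $N[v_i]=\{v_{\,i-t_1(v_i)},\dots,v_{\,i+t_2(v_i)}\}$ with $t_2(v_i)=\min\{t^\star_2(v_i),n-i\}$; moreover $t_1$ and the left endpoint $t^\star_2$ are non-decreasing and the right endpoint $i+t^\star_2(v_i)$ is strictly increasing in $i$. Hence $D$ dominates $J_n(x)$ if and only if the subscript intervals $[\,i-t_1(v_i),\,i+t_2(v_i)\,]$, $v_i\in D$, cover $\{1,\dots,n\}$, and recall $\gamma(P_m)=\lceil m/3\rceil$. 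Now run the greedy interval point-cover, each time choosing, to cover the least uncovered point $p$, the vertex $v_{M(p)}$ of \emph{largest} subscript whose interval contains $p$ (automatically the one reaching furthest right). This produces a minimum dominating set $D=\{v_{j_1},\dots,v_{j_\gamma}\}$, $j_1<\dots<j_\gamma$, with $|D|=\gamma=\gamma(J_n(x))$; for $n\ge3$ one gets $j_1=2$ and $v_{j_\gamma}$ equal or adjacent to $v_n$ (the tiny cases $n\le 3$, where $\gamma=1$, are immediate). Then I would take $P_d(J_n(x))$ to be the concatenation of the edge $v_1v_{j_1}$, a subscript-monotone shortest $v_{j_k}$--$v_{j_{k+1}}$ path for each $k$, and the edge $v_{j_\gamma}v_n$; monotone subscripts make $P_d(J_n(x))$ a simple $(v_1,v_n)$-path containing $D$.

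The crux is a \emph{length dichotomy}: each interior segment $v_{j_k}\to v_{j_{k+1}}$ has length exactly $3$, except the last one, which has length $2$ or $3$, and never $1$. Length $\ge 2$ is easy, since $j_{k+1}=M(r_k+1)>r_k$ where $r_k=j_k+t_2(v_{j_k})$ is the right reach of $v_{j_k}$, so $v_{j_{k+1}}\notin N[v_{j_k}]$. The rest rests on the identity
\begin{equation*}
M(p)=\min\{\,p+t^\star_2(v_p),\ n\,\},
\end{equation*}
i.e.\ the largest subscript whose neighbourhood starts no later than $p$ is exactly the right endpoint of $N[v_p]$, capped at $n$. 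Granting it, $v_{j_k}\to v_{r_k}\to v_{r_k+1}\to v_{j_{k+1}}$ is a genuine path of length $3$ while the cap is inactive; one shows the cap can first be active only at the last segment, and comparing the two-step reach $\min\{r_k+t^\star_2(v_{r_k}),n\}$ with $j_{k+1}=\min\{(r_k+1)+t^\star_2(v_{r_k+1}),n\}$ shows it falls short of $j_{k+1}$ except at that last segment. Consequently the length $\ell$ of $P_d(J_n(x))$ lies in $\{3\gamma-3,3\gamma-2,3\gamma-1\}$, so $|V(P_d(J_n(x)))|\in\{3\gamma-2,3\gamma-1,3\gamma\}$ and $\gamma(P_d(J_n(x)))=\gamma$. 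Since $D\subseteq V(P_d(J_n(x)))$, $|D|=\gamma=\gamma(P_d(J_n(x)))$, and $D$ dominates $P_d(J_n(x))$ (every path vertex not in $D$ is a path-neighbour of a vertex of $D$, because all segments have length $\le3$), $D$ is a $\gamma$-set of $P_d(J_n(x))$; by construction it is also a $\gamma$-set of $J_n(x)$. Taking a shortest $(v_1,v_n)$-path with this property as the primary minimal dom-path then matches the statement.

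The main obstacle is the displayed identity and the attendant floor-function bookkeeping -- exactly the ``Golden ratio-like floor function'' behaviour advertised in the abstract. Writing $\varphi=(1+\sqrt5)/2$, one has $2/(3+\sqrt5)=\varphi^{-2}$ and, since $\varphi^{-1}+\varphi^{-2}=1$, $t^\star_2(v_i)=i-\lfloor(i+1)\varphi^{-2}\rfloor=\lfloor(i+1)\varphi^{-1}\rfloor$ is a Beatty sequence; then $p+t^\star_2(v_p)=\lfloor p\varphi+\varphi^{-1}\rfloor=\max\{j:\lfloor(j+1)\varphi^{-1}\rfloor\le p\}$, which is the uncapped form of the identity, and that consecutive values of $\lfloor p\varphi+\varphi^{-1}\rfloor$ differ by $1$ or $2$ (because $1<\varphi<2$) is what pins each segment length down. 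I would expect that carefully justifying these Wythoff/Beatty manipulations, rather than the graph-theoretic scaffolding, is the delicate part.
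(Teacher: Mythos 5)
Your construction is essentially the paper's own: the text following the observation builds the identical path $v_1v_2v_3\,v_4v_7v_{11}\,v_{12}v_{20}v_{32}\cdots$ by repeatedly taking \emph{upper neighbours} in $3$-strings, takes the middle vertices $\{v_2,v_7,v_{20},v_{54},\dots\}$ as the dominating set, and appeals to $\gamma(P_m)=\lceil m/3\rceil$ together with Lemma \ref{lem1}. Your write-up is correct and in fact supplies exactly what the paper leaves at the level of a heuristic and of worked examples ($J_8(x)$, $J_{15}(x)$) -- namely the interval/Beatty bookkeeping that forces each interior segment to have length exactly $3$, and the optimality of the greedy interval cover, which replaces the paper's reliance on Lemma \ref{lem1}.
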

Recall that for a path $P_n$, $n = 2,3,4,\dots$ the domination number is given by $\gamma(P_n) = \lceil \frac{n}{3}\rceil$. The proof of the aforesaid result provides the key argument. In a sufficiently large linear Jaco graph $J_n(x)$ the vertex $v_2$ dominates vertices $v_1,v_3$. Now proceed to vertex $v_4$. To minimize the choice of dominating vertices select the \textit{upper neighbor} of $v_4$ i.e. vertex $v_7$ as well as the \textit{upper neighbor} of $v_7$ i.e. vertex $v_{11}$. Clearly the set $\{v_2,v_4\}$ dominates the linear Jaco graphs $J_4(x)$, $J_5(x)$, $J_6(x)$ and $J_7(x)$. The set $\{v_2,v_7\}$ dominates $J_{n}(x)$, $7\leq n\leq 11$. Now proceed to vertex $v_{12}$ and repeat the reasoning to minimize the choice of dominating vertices. Clearly, the aforesaid constitutes a heuristic method to yield a primary minimal dom-path for any linear Jaco graph $J_n(x)$, $n \geq 1$.
\begin{observation}\label{obs2}
A secondary minimal dom-path $Q_d(J_n(x))$ exists. The procedure remains the same by beginning at vertex $v_n$. For example, for $J_8(x)$ the path $Q_d(J_8(x)) = v_8v_5v_3v_2v_1$ is the secondary minimal dom-path. For $J_{15}(x)$ the path $Q_d(J_{15}(x)) = v_{15}v_9v_6v_5v_3v_2v_1$ is the secondary minimal dom-path.
\end{observation}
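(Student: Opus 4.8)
The plan is to obtain $Q_d(J_n(x))$ as the mirror image of the primary construction: one runs the greedy domination of $J_n(x)$ starting from $v_n$ and moving leftwards, selecting at each step a \emph{lower} neighbour where the primary procedure selected an \emph{upper} neighbour, and then threads a $(v_n,v_1)$-path through the vertices so chosen. Two structural facts are needed first. From the construction method, processing $v_j$ attaches a block of exactly $j-t_1(v_j)$ consecutive upper neighbours, so $t^\star_2(v_j)=j-t_1(v_j)$; moreover the closed form $t_1(v_i)=\lfloor 2(i+1)/(3+\sqrt5)\rfloor$ forces $t_1(v_{i+1})-t_1(v_i)\in\{0,1\}$, from which $N^-(v_i)$ is likewise a consecutive block, namely $\{v_{i-t_1(v_i)},\dots,v_{i-1}\}$. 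Hence, read as a set of indices, $N[v_i]$ is the interval $\big[\,i-t_1(v_i),\ \min\{n,\,2i-t_1(v_i)\}\,\big]$; in particular the lower neighbour of $v_i$ is $v_{i-t_1(v_i)}$, and $v_i$ dominates every vertex whose index lies in that interval.

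Next comes the greedy procedure and the path. Since $v_n$ must be dominated, put its lower neighbour $d_1:=v_{n-t_1(v_n)}$ into the dominating set $D$; then $d_1$ covers the index interval $[a_1,n]$ with $a_1=n-t_1(v_n)-t_1(v_{n-t_1(v_n)})$. If $a_1>1$, let the highest uncovered vertex $v_{a_1-1}$ contribute its lower neighbour $d_2:=v_{(a_1-1)-t_1(v_{a_1-1})}$, and iterate; when the residual interval reaches index $1$ one finishes with $v_1$ itself, obtaining $D=\{d_1,\dots,d_m\}$. I would then thread $Q_d$ from $v_n$ down to $v_1$ so that $v_n$ sits at position $1$ and each $d_j$ at position $3j-1$, inserting between consecutive dominators (and along the tail to $v_1$) short connecting paths of the prescribed length, which exist because consecutive-index vertices are adjacent and the neighbourhood blocks overlap. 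On such a $Q_d$ the set $D$ occupies positions $2,5,8,\dots$, so $D$ is the standard minimum dominating set of the path $Q_d$, of size $\lceil |V(Q_d)|/3\rceil=\gamma(Q_d)$.

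To finish I would show $|D|=\gamma(J_n(x))$. Because each $N[v_i]$ is an index-interval, dominating $J_n(x)$ is an instance of covering the integer segment $[1,n]$ by intervals, for which the right-to-left greedy choice of the furthest-reaching interval is optimal; hence $|D|=\gamma(J_n(x))$, the value pinned down by Observation \ref{obs1} and Table 1. It follows that $D$ is simultaneously a $\gamma$-set of $Q_d$ and a $\gamma$-set of $J_n(x)$, so $Q_d(J_n(x))$ is a minimal dom-path; being grown from the opposite end, it differs in general from $P_d(J_n(x))$, as the displayed cases $J_8(x)$ and $J_{15}(x)$ already show.

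The step I expect to be the main obstacle is proving that the greedy intervals chain together with no gap: at every stage one needs $a_j-1\ \ge\ 2\,t_1(v_{a_j-1})+t_1\!\big(v_{(a_j-1)-t_1(v_{a_j-1})}\big)$, and this inequality is \emph{exactly} tight under the approximation $t_1(v_i)\approx\varphi^{-2}(i+1)$ together with the identity $\varphi^{-1}+\varphi^{-2}=1$. Controlling the floor-function error terms so that they never accumulate enough to violate it, uniformly in $n$ — and confirming likewise that the prescribed filler vertices always exist — is precisely the estimate one would need to upgrade the heuristic behind Observation \ref{obs1} to a rigorous proof; where possible I would try to derive the statement for $Q_d$ from the corresponding one for $P_d$ rather than redo the computation from scratch.
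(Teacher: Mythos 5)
The paper offers no proof of Observation \ref{obs2} at all: it is stated as an observation, justified only by the two worked examples $J_8(x)$ and $J_{15}(x)$ and by an appeal to the same heuristic ``pick the extreme neighbour and iterate'' that precedes Observation \ref{obs1}. Your proposal therefore takes a genuinely different (and stronger) route: you first prove the structural fact that every closed neighbourhood $N[v_i]$ is an index interval $[\,i-t_1(v_i),\ \min\{n,2i-t_1(v_i)\}\,]$ (using $t^\star_2(v_i)=i-t_1(v_i)$ and the fact that $t_1$ increases by at most $1$ per step, so $N^-(v_i)$ is the block $\{v_{i-t_1(v_i)},\dots,v_{i-1}\}$), and then observe that domination of $J_n(x)$ is exactly a minimum interval cover of $[1,n]$, for which the right-to-left furthest-reaching greedy is provably optimal by a standard exchange argument. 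This buys much more than the paper's version: it simultaneously gives a rigorous value for $\gamma(J_n(x))$ and a certified $\gamma$-set, rather than a heuristic. Two remarks. First, the step you flag as the ``main obstacle'' --- that the greedy intervals chain with no gap --- is not actually an obstacle and needs no floor-function estimates: at each stage the covered region is a suffix interval $[a_j,n]$, and the next dominator is \emph{by definition} the lower neighbour of $v_{a_j-1}$, hence adjacent to it, so $v_{a_j-1}\in N[d_{j+1}]$ and coverage remains contiguous automatically; the Golden-ratio asymptotics are irrelevant here. Second, the threading is slightly more delicate than ``each $d_j$ at position $3j-1$'': the natural connector $d_j \sim v_{a_j} \sim v_{a_j-1} \sim d_{j+1}$ always supplies exactly two intermediate vertices with strictly decreasing indices (consecutive indices are adjacent, so the filler vertices exist), but the final block ending at $v_1$ may be shorter, as in $Q_d(J_{15}(x))=v_{15}v_9v_6v_5v_3v_2v_1$ where $v_1$ sits at position $7$, not $8$; one must check $\lceil |V(Q_d)|/3\rceil=|D|$ case by case for the truncated tail. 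With those two adjustments your argument is sound and in fact upgrades both Observations \ref{obs1} and \ref{obs2} from heuristics to theorems.
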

Henceforth, $P_d(G)$ will be abbreviated as $P_d$ if the context of graph $G$ is clear.
\begin{conjecture}\label{conj6}
For any linear Jaco graph $J_n(x)$, $n \geq 1$ the length of a $diam$-path and a primary minimal dom-path $P_d$ satisfy
\begin{center}
$|P_d|-|diam(J_n(x))| \leq 1$.
\end{center}
\end{conjecture}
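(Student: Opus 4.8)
The plan is to rewrite both quantities in the inequality as ``jump counts'' and then compare them. First I would establish that $diam(J_n(x))=d(v_1,v_n)$. The construction makes $N^+(v_i)=\{v_{i+1},\dots,v_{\min(i+t^\star_2(v_i),\,n)}\}$ and $N^-(v_i)=\{v_{\max(i-t_1(v_i),\,1)},\dots,v_{i-1}\}$ runs of consecutive indices, and $j\mapsto j+t^\star_2(v_j)=2j-t_1(v_j)$ is non-decreasing because $t_1(v_{j+1})-t_1(v_j)\in\{0,1\}$ (from $t_1(v_j)=\lfloor 2(j+1)/(3+\sqrt5)\rfloor$ and $2/(3+\sqrt5)<\tfrac12$). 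A breadth-first argument from $v_1$ then shows that the ball of radius $k$ about $v_1$ is exactly $\{v_1,\dots,v_{R_k}\}$, where $R_0=1$ and $R_{k+1}=\min(R_k+t^\star_2(v_{R_k}),\,n)$, and the same monotonicity shows a greedy walk from any $v_i$ is never behind one from $v_1$, so $d(v_i,v_j)\le d(v_1,v_n)$ for all $i,j$. Hence $diam(J_n(x))=\min\{k:R_k=n\}$; a shortest $(v_1,v_n)$-path is obtained by always passing to the \emph{upper neighbour}, giving the Fibonacci-type path $v_1,v_2,v_3,v_5,v_8,v_{13},\dots$, and $diam(J_n(x))$ counts these jumps (so $diam(J_n(x))\sim\log_\phi n$, with $\phi=\tfrac{1+\sqrt5}{2}$).

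Second I would describe $|P_d|$. By Observation~\ref{obs1} a $\gamma$-set of the path $P_d$ is a $\gamma$-set of $J_n(x)$, so $\gamma(P_d)=\gamma(J_n(x))$; since the domination number of a path on $m$ vertices is $\lceil m/3\rceil$, this forces $|V(P_d)|\in\{3\gamma(J_n(x))-2,\,3\gamma(J_n(x))-1,\,3\gamma(J_n(x))\}$, whence $|P_d|\ge 3\gamma(J_n(x))-3$. That such a $P_d$ exists, realising the smaller values, is what the heuristic after Observation~\ref{obs1} is meant to guarantee: $v_2=D_0$ followed by the greedy left-to-right cover of $\{v_1,\dots,v_n\}$ by the closed neighbourhoods $N[v_j]=\{v_{j-t_1(v_j)},\dots,v_{j+t^\star_2(v_j)}\}$ (greedy interval cover is optimal, so this set has exactly $\gamma(J_n(x))$ elements), strung together with two filler vertices between consecutive dominators; the one non-trivial point is that two fillers can always be inserted, which reduces to elementary inequalities among the Beatty sequences $t_1$ and $t^\star_2$, ultimately expressing $\phi^2=\phi+1$.

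The comparison is the real obstacle, and here a proof of Conjecture~\ref{conj6} as stated would require $3\gamma(J_n(x))-3\le diam(J_n(x))+1$, i.e. $\gamma(J_n(x))\le\tfrac13\bigl(diam(J_n(x))+4\bigr)$. I do not expect this to hold beyond the tabulated range: a single vertex of $J_n(x)$ dominates an index-interval of width only about its own index, so the greedy dominators $v_2,v_7,v_{20},v_{54},\dots$ grow by a ratio tending to $\phi^2$ and $\gamma(J_n(x))\sim\log_{\phi^2}n=\tfrac12\log_\phi n\sim\tfrac12\,diam(J_n(x))$, whence $|P_d|-diam(J_n(x))\sim\gamma(J_n(x))\to\infty$. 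Concretely the inequality already fails at $n=33$: three closed neighbourhoods can reach only $v_1$ through $v_{32}$ (greedy: $v_2$ covers $[1,3]$, then $v_7$ covers $[1,11]$, then $v_{20}$ covers $[1,32]$), so $\gamma(J_{33}(x))=4$ and $|P_d|\ge 3\cdot4-3=9$, whereas $diam(J_{33}(x))=d(v_1,v_{33})=7$ along $v_1,v_2,v_3,v_5,v_8,v_{13},v_{21},v_{33}$; thus $|P_d|-diam(J_{33}(x))\ge 2>1$. So the approach above does not prove Conjecture~\ref{conj6} with the constant $1$; what it does prove is $|P_d|-diam(J_n(x))=O(\log n)$ (combine $|P_d|\le 3\gamma(J_n(x))-1$ with $\gamma,\,diam=\Theta(\log n)$), and a constant gap of $1$ is plausible only in the range where $3\gamma(J_n(x))-3\le diam(J_n(x))+1$, roughly $n\le 32$.
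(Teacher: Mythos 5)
The paper offers no proof of Conjecture \ref{conj6}; it is stated as a conjecture supported only by Table 1, which stops at $n=32$. Your submission is therefore not an alternative proof but a refutation, and as far as I can verify it is correct. Both pillars hold. (i) Every closed neighbourhood $N[v_j]$ is an interval of consecutive indices and $j\mapsto j+t^\star_2(v_j)=2j-t_1(v_j)$ is strictly increasing, so the radius-$k$ ball about any vertex is an interval whose right endpoint is obtained by iterating the upper-neighbour map; hence $diam(J_n(x))=d(v_1,v_n)$, and for $n=33$ the iteration $1\to2\to3\to5\to8\to13\to21\to34$ gives $diam(J_{33}(x))=7$. (ii) Domination of $J_n(x)$ is a minimum interval cover, for which the left-to-right greedy rule is optimal: $v_1$ forces a dominator in $\{v_1,v_2\}$ (covering at most $v_1,v_2,v_3$), then $v_4$ forces one of index at most $7$ (covering at most up to $v_{11}$), then $v_{12}$ forces one of index at most $20$ (covering at most up to $v_{32}$), and $v_{33}$ forces a fourth; so $\gamma(J_{33}(x))=4$, which the paper itself implicitly concedes by listing $X_1=\{v_2,v_7,v_{20},v_{33}\}$ as a $\gamma$-set. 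Since $\gamma(P_m)=\lceil m/3\rceil$, any path whose $\gamma$-set is also a $\gamma$-set of $J_{33}(x)$ has at least $10$ vertices, so $|P_d|\ge 9$ while $diam(J_{33}(x))=7$: the conjectured bound fails at the very first order beyond the table. Your asymptotic comparison $\gamma(J_n(x))\sim\tfrac12\,diam(J_n(x))$ then shows the defect $|P_d|-diam(J_n(x))$ is unbounded, so no constant on the right-hand side can repair the statement.

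Two remarks. The only step you need from your second paragraph is the lower bound $|P_d|\ge 3\gamma(J_n(x))-3$; the existence of a dom-path attaining $3\gamma-2$ vertices (your ``two fillers'' insertion) is left vague, but it plays no role in the refutation and should simply be flagged as unproved. Also, your identification of the greedy dominator subscripts can be made exact: they are $F_{2t+2}-1=2,7,20,54,143,\dots$ (Fibonacci numbers minus one), which agree with $\lfloor e^t\rfloor$ only for $t\le 4$; so the same computation that kills Conjecture \ref{conj6} at $n=33$ also contradicts Conjecture \ref{conj7} at the fifth term ($143\ne 148$).
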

\begin{claim}\label{clm1}
A $\gamma$-set of a  $diam$-path \textit{per se} is not necessarily a $\gamma$-set of $J_n(x)$.
\end{claim}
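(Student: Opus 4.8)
The plan is to prove Claim \ref{clm1} by a single counterexample, since the statement only asserts that the property \emph{can} fail. I would take $J_8(x)$ of Figure \ref{fig1}, exhibit a diametral path of it together with a minimum dominating set of that path, and show the latter is not even a dominating set of $J_8(x)$; I would then note that $J_{15}(x)$, discussed above, is a second witness of exactly the same kind.

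First I would extract the needed adjacencies of $J_8(x)$ directly from the construction method: $v_1$ has the single neighbour $v_2$; the step-count at $i = 3$ gives $N^+(v_3) = \{v_4, v_5\}$; the step-count at $i = 4$ gives $N^-(v_4) = \{v_3\}$ and $N^+(v_4) = \{v_5, v_6, v_7\}$; and the step-count at $i = 5$ gives $N^+(v_5) = \{v_6, v_7, v_8\}$. In particular $v_1 v_2 v_3 v_5 v_8$ is a $v_1$--$v_8$ path, and tracking the forward-reach map $v_i \mapsto \max N^+(v_i)$ from $v_1$ (which is forced to pass through $v_1 v_2 v_3$ and can then advance at most to $v_5$ and thence to $v_8$) shows no shorter $v_1$--$v_8$ walk exists; hence this path realizes $diam(J_8(x)) = 4$, i.e. it is a $P_{diam}(J_8(x))$. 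Since $P_{diam}(J_8(x)) \cong P_5$, the recalled identity $\gamma(P_n) = \lceil n/3 \rceil$ gives $\gamma(P_{diam}(J_8(x))) = 2$, and $S = \{v_2, v_8\}$ is one of its minimum dominating sets, because the two closed path-neighbourhoods $N[v_2] = \{v_1, v_2, v_3\}$ and $N[v_8] = \{v_5, v_8\}$ together cover all five path vertices $v_1, v_2, v_3, v_5, v_8$.

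The conclusion is then immediate: in $J_8(x)$ the closed neighbourhood $N[v_4] = \{v_3, v_4, v_5, v_6, v_7\}$ is disjoint from $S = \{v_2, v_8\}$, so $v_4$ is undominated by $S$. Thus the $\gamma$-set $S$ of the $diam$-path $P_{diam}(J_8(x))$ is not a dominating set of $J_8(x)$, let alone a $\gamma$-set, which proves the claim. The same computation with $P_{diam}(J_{15}(x)) = v_1 v_2 v_3 v_5 v_8 v_{13} v_{15}$, its minimum dominating set $\{v_2, v_8, v_{15}\}$, and again the vertex $v_4$ yields a second, larger witness.

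No genuine obstacle arises, as one counterexample suffices. The only care needed is the adjacency bookkeeping for $J_8(x)$ --- in particular the fact that $P_{diam}(J_8(x))$ ``skips over'' $v_4$, whose neighbours in $J_8(x)$ are precisely $v_3, v_5, v_6, v_7$, none of which lies in $S$ --- together with the check that $v_1 v_2 v_3 v_5 v_8$ genuinely attains the diameter rather than merely being one convenient $(v_1, v_8)$-path. I would also remark, as a comment rather than part of the proof, that this is exactly the phenomenon motivating Observation \ref{obs1}: to make a path's $\gamma$-set transfer to the whole linear Jaco graph the path must be chosen carefully (the primary minimal dom-path $P_d$ does the job), whereas an arbitrary diametral path need not.
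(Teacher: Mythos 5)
Your proposal is correct and takes essentially the same approach as the paper: the claim is justified by a single concrete counterexample, namely a diametral path whose $\gamma$-set leaves the vertex $v_4$ (whose neighbours $v_3,v_5,v_6,v_7$ all get skipped) undominated in the full graph. One remark worth making: your bookkeeping is actually more reliable than the paper's own inline example, since the paper asserts that $\{v_3,v_8,v_{15}\}$ fails to dominate $v_4$ in $J_{15}(x)$, whereas $v_3v_4$ is an edge (added at step-count $i=3$), so that particular set in fact fails on $v_1$; your witnesses $\{v_2,v_8\}$ for $J_8(x)$ and $\{v_2,v_8,v_{15}\}$ for $J_{15}(x)$ are genuine $\gamma$-sets of the respective diametral paths and verifiably miss $v_4$.
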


We begin with a lemma which formalizes Observation \ref{obs1}. 
\begin{lemma}\label{lem1}
If a general graph $G$ has a pair of vertices say $v_i, v_j$ for which some minimal dom-path $P_d = (v_i,v_j)$-path exists (not necessarily a diam-path) with a $\Delta$-set $X$ of the path $P_d$ \textit{per se} and $\bigcup\limits_{v_i \in X}N[v_i] = V(G)$ then $\gamma(G) = |X| = \gamma(P_d)$.
\end{lemma}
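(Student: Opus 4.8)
The plan is to squeeze $\gamma(G)$ between two matching bounds. Write $P_d = u_0u_1\cdots u_\ell$, set $m=\ell+1=|V(P_d)|$, and note first that $X$, being a minimum dominating set of the path $P_d$, satisfies $|X|=\gamma(P_d)=\lceil m/3\rceil$ by the standard path formula recalled above. (I read the phrase \emph{$\Delta$-set $X$ of $P_d$} in the statement as a $\gamma$-set of $P_d$, consistent with Observation~\ref{obs1}.)

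The upper bound is immediate: the hypothesis $\bigcup_{v\in X}N[v]=V(G)$ says precisely that $X$ is a dominating set of $G$, whence $\gamma(G)\le|X|=\gamma(P_d)$.

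The substance is the reverse inequality $\gamma(G)\ge\gamma(P_d)$, and here I would use that a \emph{minimal} dom-path is in particular a shortest $(v_i,v_j)$-path, so that distances along $P_d$ agree with distances in $G$; this isometry is exactly the property the argument needs. The key structural fact is then that no vertex $w\in V(G)$ can lie in $N[u_a]\cap N[u_b]$ with $|a-b|\ge 3$, since the walk $u_a\,w\,u_b$ would force $d_G(u_a,u_b)\le 2<|a-b|$. Equivalently, for every $w\in V(G)$ the index set $\{k:\ u_k\in N[w]\}$ is contained in a block of at most three consecutive indices. Now let $D$ be any dominating set of $G$; picking for each $k$ some $d_k\in D\cap N[u_k]$, the blocks attached to the distinct vertices among $d_0,\dots,d_\ell$ together cover $\{0,1,\dots,\ell\}$, and $m$ integers cannot be covered by fewer than $\lceil m/3\rceil$ blocks of length at most $3$. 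Hence $|D|\ge\lceil m/3\rceil=\gamma(P_d)$, so $\gamma(G)\ge\gamma(P_d)$; combined with the upper bound and $|X|=\gamma(P_d)$ this gives $\gamma(G)=|X|=\gamma(P_d)$.

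I expect the main obstacle to be definitional rather than computational. For a genuinely arbitrary graph $G$ the statement fails if \emph{minimal dom-path} is read merely as a path one of whose $\gamma$-sets dominates $G$: one can attach a single well-placed vertex to a long induced path so that $\gamma(G)$ becomes much smaller than that path's domination number while its $\gamma$-set still reaches all of $G$. The lower bound genuinely needs the local-shortness of $P_d$ (each vertex of $G$ meeting at most three consecutive vertices of $P_d$), which is automatic once $P_d$ is isometric. So the real work is to confirm that the primary and secondary minimal dom-paths produced by the greedy upper-neighbour procedure of Observations~\ref{obs1}--\ref{obs2} are isometric paths of $J_n(x)$; and if $P_d$ is occasionally one edge longer than a $(v_1,v_n)$-geodesic, as Conjecture~\ref{conj6} allows, one must check directly that it still enjoys the three-consecutive-vertices property, after which the counting above carries over unchanged.
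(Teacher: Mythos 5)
Your upper bound and your reading of ``$\Delta$-set of $P_d$'' as a $\gamma$-set of the path are both fine, and the block-covering count in your lower bound would be airtight \emph{if} every vertex of $G$ met at most three consecutive vertices of $P_d$. The gap is that this premise, which you derive from the claim that a minimal dom-path is a shortest $(v_i,v_j)$-path, is false for the very paths the lemma is meant to apply to. The paper's own example makes this concrete: $P_d(J_8(x))=v_1v_2v_3v_4v_7v_8$ has length $5$ while $d_{J_8(x)}(v_1,v_8)=4$ (the diam-path is $v_1v_2v_3v_5v_8$), so $P_d$ is not a geodesic; worse, writing $P_d=u_0u_1\cdots u_5$, the vertex $v_5$ of $J_8(x)$ satisfies $N[v_5]\cap V(P_d)=\{v_3,v_4,v_7,v_8\}=\{u_2,u_3,u_4,u_5\}$, a block of four consecutive indices. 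So the three-consecutive-vertices property fails, your covering bound degrades to $\lceil m/4\rceil$ (or worse for longer overlaps), and the inequality $\gamma(G)\ge\gamma(P_d)$ is not established. You flag exactly this issue in your final paragraph, but flagging it does not close it: as written, your argument proves the lemma only for isometric $P_d$, a class that excludes the paper's primary minimal dom-paths.

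For comparison, the paper's own proof does not go through isometry at all: it argues by exchange, asserting that if fewer than $|X|$ vertices dominated $G$ then two far-apart vertices of $X$ could be replaced by a single vertex of $V(G)\setminus X$, contradicting that $P_d$ is a \emph{shortest} path among dom-paths (minimality over paths whose $\gamma$-set dominates $G$, not over all $(v_i,v_j)$-paths). That argument is itself quite informal, but it does lean on the correct minimality hypothesis. To repair your proof you would need either to add an explicit isometry or bounded-overlap hypothesis to the lemma (thereby changing its statement and losing the paper's intended applications), or to replace the counting step with an exchange argument tied to the actual definition of ``minimal dom-path.'' Your closing observation that the lemma is false for an arbitrary path whose $\gamma$-set happens to dominate $G$ is correct and worth keeping: it shows that some nontrivial minimality property of $P_d$ must enter the lower bound, which is precisely the part still missing.
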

\begin{proof}
Since $\bigcup\limits_{v_i \in X}N[v_i] = V(G)$ then obviously $\gamma(G) \leq |X|$. In respect of $V(P_d)$ \textit{per se} no less vertices will suffice unless a vertex $v_j \in V(G)\backslash X$ such that at least two vertices say $v_t,v_q \in X$ at distance $d(v_t,v_q) > 2$ can be replaced by some vertex $v_s \in V(G)\backslash X$ which dominates $G$. If the latter is possible then $P_d$ is not such shortest path. This is a contradiction. Thus, the result is settled.
\end{proof} 
It is a trivial fact that $\gamma(J_1(x)) = 0$, $\gamma(J_2(x)) = \gamma(J_3(x)) = 1$, $\gamma(J_4(x)) = 2$. For $J_n(x)$, $n \geq 5$ consider a sufficiently large $J_m(x)$, $m \geq n$ to find the path:
\begin{center}
$P_d = \underbrace{v_1v_2v_3}_{3-string}\underbrace{v_4v_7v_{11}}_{3-string}\underbrace{v_{12}v_{20}v_{32}}_{3-string}\underbrace{v_{(q_1=32+1)}v_{(q_2=33+t^\star_2(v_{33}))}v_{(q_3= q_2+t^\star_2(v_{q_2}))}}_{3-string}\cdots\underbrace{v_{s_1}v_{s_2}v_{s_3}}_{3-string}$
\end{center}
such that, $s_3$ is a minimum to permit: 
\begin{center}
$v_n \in \{v_{s_1},v_{(s_1+1)},v_{(s_1+2)},\dots,v_{s_2},v_{(s_2+1)},v_{(s_2+2)},\dots,v_{s_3}\}$.
\end{center}
From the pattern followed to define $P_d$ it is easy to see that:

Case 1: If $v_n \in \{v_{s_1},v_{(s_1+1)},v_{(s_1+2)},\dots,v_{s_2}\}$ then
\begin{center}
$X_1 = \{v_2,,v_7,v_{20},v_{54}, \dots, v_n\}$ is a $\gamma$-set of path
\end{center}
\begin{center}
$P_d = \underbrace{v_1v_2v_3}_{3-string}\underbrace{v_4v_7v_{11}}_{3-string}\underbrace{v_{12}v_{20}v_{32}}_{3-string}\underbrace{v_{(q_1=32+1)}v_{(q_2=33+t^\star_2(v_{33}))}v_{(q_3= q_2+t^\star_2(v_{q_2}))}}_{3-string}\cdots\underbrace{v_{s_1}v_n}$.
\end{center}
By Lemma \ref{lem1} it follows that $X_1$ is a $\gamma$-set of $J_n(x)$.

Case 2: If $v_n \in \{v_{(s_2+1)},v_{(s_2+2)},\dots,v_{s_3}\}$ then
\begin{center}
$X_2 = \{v_2,,v_7,v_{20},v_{54}, \dots, v_{s_2}\}$ is a $\gamma$-set of path
\end{center}
\begin{center}
$P_d = \underbrace{v_1v_2v_3}_{3-string}\underbrace{v_4v_7v_{11}}_{3-string}\underbrace{v_{12}v_{20}v_{32}}_{3-string}\underbrace{v_{(q_1=32+1)}v_{(q_2=33+t^\star_2(v_{33}))}v_{(q_3= q_2+t^\star_2(v_{q_2}))}}_{3-string}\cdots\underbrace{v_{s_1}v_{s_2}v_n}$.
\end{center}
By Lemma \ref{lem1} it follows that $X_2$ is a $\gamma$-set of $J_n(x)$.

With the artificial first term "1" added hence, by considering the sequence $i = "1",2,7,20,54,\dots$ the next conjecture follows.
\begin{conjecture}\label{conj7}
The vertex subscripts of a $\gamma$-set $X = "\{v_1\}"\cup \{v_2,v_7,v_{20},v_{54},\dots\}$ of the infinite linear Jaco graph $J_\infty(x)$ is given by the sequence A000149: $a(t) = \lfloor e^t\rfloor$, $t = 0,1,2,\dots$ where $e \approx 2.71828$ is the Euler number (or Napier's constant).

Furthermore, it implies that sequence A000149 is $p$-graphical where $p(G) = \{i:j$ the subscript of $v_j \in X$ with $X$ some $\gamma$-set of $G\}$ and $\mathcal{F} = \{G:G=J_n(x), n = 1,2,3,\dots\}$.
\end{conjecture}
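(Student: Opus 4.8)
The plan is to reduce the statement to the explicit greedy dom‑path construction described just above it, together with the neighbourhood data already available. For a sufficiently large $J_n(x)$ the \emph{upper neighbour} of $v_i$ is the vertex $v_{g(i)}$ with
\[
g(i)\;=\;i+t^\star_2(v_i)\;=\;2i-t_1(v_i)\;=\;2i-\Big\lfloor\tfrac{2(i+1)}{3+\sqrt5}\Big\rfloor ,
\]
using $t^\star_2(v_i)=i-t_1(v_i)$ and Wagner's identity. The primary minimal dom‑path is assembled from consecutive $3$‑strings: if a $3$‑string ends at $v_m$, the next one is $\{v_{m+1},\,v_{g(m+1)},\,v_{g(g(m+1))}\}$ and the vertex entering the dominating set $X$ is its middle vertex $v_{g(m+1)}$. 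Hence, writing $a_0=1$ (the artificial term) and $a_1=2$, the subscripts occurring in $X$ satisfy the recursion
\[
a_{k+1}\;=\;g\big(g(a_k)+1\big),\qquad k\ge 1 .
\]

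A full proof would then have two ingredients: (i) justify via Lemma~\ref{lem1} that the greedily produced set is genuinely a $\gamma$‑set of each $J_n(x)$ considered — this needs the interval structure of Jaco neighbourhoods, so that $v_{g(m+1)}$ dominates all of $v_{m+1},\dots,v_{g(g(m+1))}$ and $P_d$ is shortest in the relevant sense; and (ii) show $a_k=\lfloor e^{k}\rfloor$ for every $k$, presumably by strong induction while carefully tracking the floor error terms inside $g$.

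The main obstacle is step (ii), and I expect it to be fatal. From the displayed formula, $2-\tfrac{2}{3+\sqrt5}=\tfrac{1+\sqrt5}{2}=\varphi$, so $g(i)=\varphi i+O(1)$ and therefore $a_{k+1}=g(g(a_k)+1)=\varphi^{2}a_k+O(1)$, which forces $a_{k+1}/a_k\to\varphi^{2}=\tfrac{3+\sqrt5}{2}\approx 2.618$. But the consecutive ratios of $\lfloor e^{k}\rfloor$ tend to $e\approx 2.718\neq\varphi^{2}$, so the two sequences cannot agree eventually; Conjecture~\ref{conj7} is therefore false. The agreement for small $k$ is real but short‑lived: the recursion gives $a_2=7$, $a_3=20$, $a_4=54$, matching $\lfloor e^{2}\rfloor,\lfloor e^{3}\rfloor,\lfloor e^{4}\rfloor$, but $a_5=g(g(54)+1)=g(88)=176-\big\lfloor 89(3-\sqrt5)/2\big\rfloor=176-33=143$, whereas $\lfloor e^{5}\rfloor=148$; similarly $a_6=376\neq 403=\lfloor e^{6}\rfloor$. (Note the discrepancy does not even rely on Lemma~\ref{lem1}: the subscript $143$ is forced by the definition of the greedy procedure and the upper‑neighbour formula, so the set $\{v_1,v_2,v_7,v_{20},v_{54},v_{148},\dots\}$ named in the conjecture is simply not the one the construction produces, and in fact fails to dominate $v_{88},\dots,v_{91}$.)

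So the realistic "proof" is a disproof: establish the recursion $a_{k+1}=g(g(a_k)+1)$ rigorously and then exhibit $a_5=143\neq148$. If one wants a positive replacement, the analysis above indicates that A000149 should be swapped for the sequence generated by this recursion, whose growth constant is $\varphi^{2}$ rather than $e$; pinning down a clean closed form for it — it is not evidently a standard Beatty or Wythoff sequence because of the nested floor functions — would be the natural follow‑up problem.
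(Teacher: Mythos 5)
Your submission is a disproof rather than a proof, and it is correct as far as I can verify; note that the paper offers no proof of this statement (it is a conjecture, supported only by the preceding $3$-string construction of $P_d$). Your recursion $a_{k+1}=g(g(a_k)+1)$ with $g(i)=i+t^\star_2(v_i)=2i-t_1(v_i)$ is precisely the rule the paper encodes via $q_2=33+t^\star_2(v_{33})$ and $q_3=q_2+t^\star_2(v_{q_2})$, and your arithmetic checks out: $t_1(v_{54})=\lfloor 55(3-\sqrt5)/2\rfloor=21$ gives $g(54)=87$, so the fourth string is $v_{33}v_{54}v_{87}$, the fifth starts at $v_{88}$ with middle vertex $v_{88+(88-33)}=v_{143}$, and $143\neq\lfloor e^5\rfloor=148$; likewise the next term is $376\neq 403$. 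Your fallback observation is also correct and makes the refutation independent of Lemma~\ref{lem1} and of whether the greedy set is optimal: the set actually named in the conjecture, $\{v_1,v_2,v_7,v_{20},v_{54},v_{148},\dots\}$, leaves $v_{88},\dots,v_{91}$ undominated, since the upper neighbour of $v_{54}$ is $v_{87}$ and the lower neighbour of $v_{148}$ is $v_{92}$; so it is not a dominating set at all, let alone a $\gamma$-set. The growth-rate argument ($g(i)=\varphi i+O(1)$, hence $a_{k+1}/a_k\to\varphi^2\approx 2.618\neq e$) correctly explains why the match with A000149 is only a small-index coincidence.

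Two refinements. First, your proposed replacement sequence has a clean closed form: the string starting subscripts are $1,4,12,33,88,232,\dots=F_{2k+1}-1$ and the $\gamma$-set subscripts are $a_k=F_{2k+2}-1=2,7,20,54,143,376,\dots$ (even-indexed Fibonacci numbers minus one), consistent with your $\varphi^2$ growth rate because $F_{2k}$ is the nearest integer to $\varphi^{2k}/\sqrt5$; this, rather than $\lfloor e^t\rfloor$, is what the construction actually produces, so it is a Fibonacci/Wythoff-type sequence after all. Second, your step (i) should not lean on Lemma~\ref{lem1} as stated, since its proof in the paper is itself incomplete; establishing that the greedy set is genuinely a $\gamma$-set of each $J_n(x)$ would need an independent argument --- but, as you correctly note, none of that is required for the refutation itself.
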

\section{Avenue for research}
Consider the cycle $C_5 = v_1v_2v_3v_4v_5v_1$. Obtain the Petersen graph $\mathcal{P}$ by adding a pendent vertex $u_i$ to each of the vertices $v_i$, $i = 1,2,3,4,5$ and thereafter, add the edges $u_1u_3$, $u_1u_4$, $u_2u_4$, $u_2u_5$, $u_3u_5$. A minimal dom-path is given by $P_d(\mathcal{P}) = v_1u_1u_4u_2u_5u_3v_3$ in that $X = \{v_1,u_3,u_4\}$ is a $\gamma$-set of both $P_d(\mathcal{P})$ and the Petersen graph $\mathcal{P}$.\\
\textit{Illustration:} For the path $P_4 = v_1v_2v_3v_4$ it is easy to see that the set of distinct $\gamma$-sets is
\begin{center}
$\mathcal{X} = \{\{v_1,v_3\},\{v_1,v_4\},\{v_2,v_3\},\{v_2,v_4\}\}$.
\end{center}
Lemma \ref{lem1} can be strengthened by the next existence theorem.
\begin{theorem}
Any connected graph $G$ of order $n \geq 1$ has a minimal dom-path $P_d(G)$ such that a $\gamma$-set $X$ of $P_d(G)$ has $\bigcup\limits_{v_i \in X}N[v_i] = V(G)$ hence, $\gamma(P_d(G)) = \gamma(G)$.
\end{theorem}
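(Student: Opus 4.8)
\emph{Proof proposal.} The plan is to manufacture, inside an arbitrary connected graph $G$, a path $P_d$ whose intrinsic domination pattern reproduces a minimum dominating set of $G$; Lemma~\ref{lem1} then promotes this to $\gamma(P_d)=\gamma(G)$. Fix a $\gamma$-set $D=\{d_1,\dots,d_k\}$ of $G$ with $k=\gamma(G)$. Since $\bigcup_i N[d_i]=V(G)$ and $G$ is connected, first linearly order $D$ as $d_1,d_2,\dots,d_k$ so that consecutive members are joined by short paths of $G$ (for instance, follow a depth-first traversal of a spanning tree and record the order in which the $d_i$ are first met). Then splice these connecting pieces, together with one or two \emph{filler} vertices at each junction, into a single path $P_d=x_1x_2x_3\cdots x_m$ of $G$ in which every $d_i$ sits in a position $\equiv 2\pmod 3$ and the $d_i$ are mutually at path-distance $3$; the canonical minimum dominating set $\{x_2,x_5,x_8,\dots\}$ of the path $P_d$ \emph{per se} is then exactly $D$, so $\bigcup_{v\in X}N[v]=V(G)$ with $X=D$. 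Taking $P_d$ of least order among all paths of $G$ with this property makes it a \emph{minimal dom-path}, and Lemma~\ref{lem1} delivers $\gamma(P_d)=|D|=\gamma(G)$.

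The routine part is the residue bookkeeping modulo $3$. One must allow all three admissible ``phases'' of a path's minimum dominating set — for $P_4$ the set $\mathcal{X}$ displayed above already exhibits four distinct $\gamma$-sets — and absorb any length discrepancy between consecutive connecting strings by padding one of them with a single extra vertex; this is precisely the unit of slack recorded in Conjecture~\ref{conj6}, $|P_d|-|\mathrm{diam}|\le 1$. The heuristic ``upper-neighbour'' construction described after Observation~\ref{obs1} for linear Jaco graphs is a workable template for this step, so I would expect it to be tedious but not conceptually hard once the framework is fixed.

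The genuine obstacle is the \emph{existence} of a path in $G$ long enough to host a dominating set of size $k$. A path on $p$ vertices has domination number $\lceil p/3\rceil$, so a $\gamma$-set of $P_d$ can dominate $G$ only if $P_d$ has at least $3k-2$ vertices; equivalently, $G$ must contain a path on $\geq 3\gamma(G)-2$ vertices. This is automatic in many families (and for the linear Jaco graphs, where long induced paths abound), but it can fail drastically. In a corona $G=H\circ K_1$ one has $\gamma(G)=|V(H)|$, while every path of $G$ can use at most two of the pendant vertices, so a longest path of $G$ has at most $2+c(H)$ vertices, where $c(H)$ is the order of a longest path of $H$; taking $H$ a star with at least two leaves makes $\gamma(G)$ arbitrarily large while a longest path of $G$ stays of order at most $5$, and then no path of $G$ can carry a $\gamma$-set of $G$. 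Hence the step I expect to block a proof of the statement exactly as written is this existence claim, and a correct version of the theorem seems to need an extra hypothesis — for example that $G$ be traceable, or more weakly that $\gamma(G)\le\lceil c(G)/3\rceil$ where $c(G)$ denotes the order of a longest path of $G$. Under such a hypothesis the interleaving construction of the first paragraph should go through.
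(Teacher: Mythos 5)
Your diagnosis is correct, and the ``genuine obstacle'' you flag in your third paragraph is not a gap in your write-up --- it is a counterexample to the theorem as stated. Even the six-vertex tree $G=P_3\circ K_1$ works: attaching a pendant to each vertex of $a\!-\!c\!-\!b$ forces $\gamma(G)=3$ (the three pendant/support pairs are disjoint), while every path of $G$ uses at most two pendant vertices and so has at most five vertices, whence its own $\gamma$-set has at most $\lceil 5/3\rceil=2$ elements and cannot dominate $G$. The paper's proof buries exactly this failure in the sentence ``it is inherently possible to find a shortest path $P(X_j)$ in $G$ which contains all the vertices of any $X_j$'': for $K_{1,3}\circ K_1$ every $\gamma$-set contains either three leaves of the star or three degree-one pendants, and no path can visit three leaves of a star, so no such path $P(X_j)$ exists for any $X_j$. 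Your interleaving construction in the first paragraph is essentially the same idea as the paper's (thread a path through a chosen $\gamma$-set of $G$), and it founders on the same existence question; no modulo-$3$ bookkeeping can rescue it. Your proposed repair --- require $G$ traceable, or at least that $G$ contain a path $P$ with $\lceil|V(P)|/3\rceil\ge\gamma(G)$ --- is the right kind of hypothesis, and it is satisfied by the linear Jaco graphs, so the intended application (formalizing Observation~\ref{obs1}) survives.

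Two smaller points on the constructive half, should you pursue the repaired statement. First, the connecting segments between consecutive $d_i$ must be internally disjoint from one another and from $D$, since a path cannot revisit a vertex; a depth-first ordering alone does not guarantee this. Second, you cannot in general force every $d_i$ into a position $\equiv 2\pmod 3$ at mutual path-distance exactly $3$ inside $G$; the cleaner target, and the one Lemma~\ref{lem1} actually needs, is merely that \emph{some} minimum dominating set of the constructed path dominates $G$ --- your own $P_4$ illustration, with its four distinct $\gamma$-sets in different phases, shows why insisting on the canonical set $\{x_2,x_5,x_8,\dots\}$ is an unnecessary constraint.
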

\begin{proof}
Consider the set $\mathcal{X} = \{X_j:X_j$ a distinct $\gamma$-set of $G\}$. Since $G$ is connected it is inherently possible to find a shortest path $P(X_j)$ in $G$ which contains all the vertices of any $X_j \in \mathcal{X}$. If $\gamma(P(X_j)) = \gamma(G)$ then $\gamma(P(X_j))$ serves as $P_{d_j}(G)$. Furthermore, if $\gamma(G) \neq \gamma(P(X_j))$, ($\gamma(G) < \gamma(P(X_j))$ is not possible), it is inherently possible to minimally increase the length of $P(X_j)$ to obtain a minimal path $P_{d_j}(G)$. Let $P_d(G) = P(X_i)$ where $|P(X_i)| = min|P(X_j)|$, $P(X_j) \in \mathcal{X}$. That settles the result. 
\end{proof}
\section{Conclusion} 
It is not the claim that the integer sequences which have a relation with some graph parameter of the family of linear Jaco graphs have been exhausted. It fact, it is suggested that an interesting and worthy avenue for research is open.

The methods of proof or disproof of the collection of conjectures presented in this experimental study are left for other sophisticated minds to pursue.
\section*{Dedication}
This experimental study is dedicated to late Mr Fransloo Vorster (nicknamed "Fifi") who was the teacher of mathematics of the author at Bellville High School, Western Cape Province, South Africa. The dedication serves to acknowledge the remarkable way in which Mr Vorster could explain the wonders of High School Mathematics. May his soul rest in peace. 
\section*{Acknowledgment}
The author would like to thank the anonymous referees for their constructive comments, which helped to improve on the elegance of this paper.

\end{document}